\newtheorem{theorem}{Theorem}[section]
\newtheorem{corollary}[theorem]{Corollary}
\newtheorem{lemma}[theorem]{Lemma}
\newtheorem{proposition}[theorem]{Proposition}
\newtheorem{problem}[theorem]{Problem}
\newtheorem{conjecture}[theorem]{Conjecture}
\theoremstyle{definition}
\newcommand{\R}{{\mathbb R}}
\theoremstyle{remark}
\newtheorem{remark}[theorem]{Remark}
\begin{document}
\title[Distributions of points  realizing maximum energies]
{Distributions of points on non-extensible closed curves in $\R^3$ realizing maximum energies}
 
\author{Shiu-Yuen Cheng}
\address{Department of Mathematical Sciences, Tsinghua University, Beijing, 100084, CHINA}
\email{sycheng@mail.tsinghua.edu.cn}
\author{Zhongzi Wang}
\address{Department of Mathematical Sciences, Tsinghua University, Beijing, 100084, CHINA}
\email{wangzz18@mails.tsinghua.edu.cn}

\subjclass[2010]{Primary 57M25; Secondary 52C25}

\keywords{Distribution of points, Maximum energies, Regular $n$-gon, Geometric configuration}

\begin{abstract}  
Let $G_n$ be a non-extensible, flexible closed curve  of length $n$ in the 3-space $\R^3$ with $n$ particles $A_1$,...,$A_n$ 
evenly fixed (according to the arc length of $G_n$) on the curve.  Let $f:(0, \infty)\to \R$ be  an increasing and continuous   function.
Define an energy function 

$$E^f_n(G_n)= \sum_{p< q} f(|A_pA_q|),$$ where $|A_pA_q|$ is the distance between $A_p$ and $A_q$ in $\R^3$. 
We address a natural and interesting problem: 
{\it What is the shape of $G_n$ when $E^f_n(G_n)$ reaches the maximum? } 

In many natural cases, one such case being $f(t) = t^\alpha$ with $0 < \alpha \le 2$, the maximizers are regular $n$-gons and in all cases the maximizers are (possibly degenerate) convex $n$-gons with each edge of length 1.




\end{abstract}
\date{}
\maketitle
{\bf Data availability statement:} Not applicable.

\section{Introduction}
\subsection{Results of the paper} The distributions of points under certain constraints draw the attention of many people.  A motivation for our study is as below:

Let $G_n$ be a non-extensible, flexible closed curve  of length $n$ in the 3-space $\R^3$ with $n$ particles $A_1$,...,$A_n$ 
evenly fixed (according to the arc length of $G_n$) on the curve.  
Let  $\mathcal{G}_n $  be the union of all such $G_n$. Let $f:(0, \infty)\to \R$ be  an increasing and continuous   function.  
Define an energy function 
$$E^f_n(G_n)= \sum_{p< q} f(|A_pA_q|) \qquad (1.1)$$where $|A_pA_q|$ is the distance between $A_p$ and $A_q$ in $\R^3$. Note $|A_i A_{i+1}|\le 1$.

\begin{problem}\label{main}
What is the shape of $G_n$ when the energy $E^\alpha_n$ reaches the maximum? 
\end{problem}

Note $E^f_n(G_n)$ relies only on the positions of particles of 
$G_n$, but the positions of 
those particles are constrained by the  non-extensible curve. 
\begin{figure}[htbp]
\begin{center}
\includegraphics[width=300pt, height=140pt]{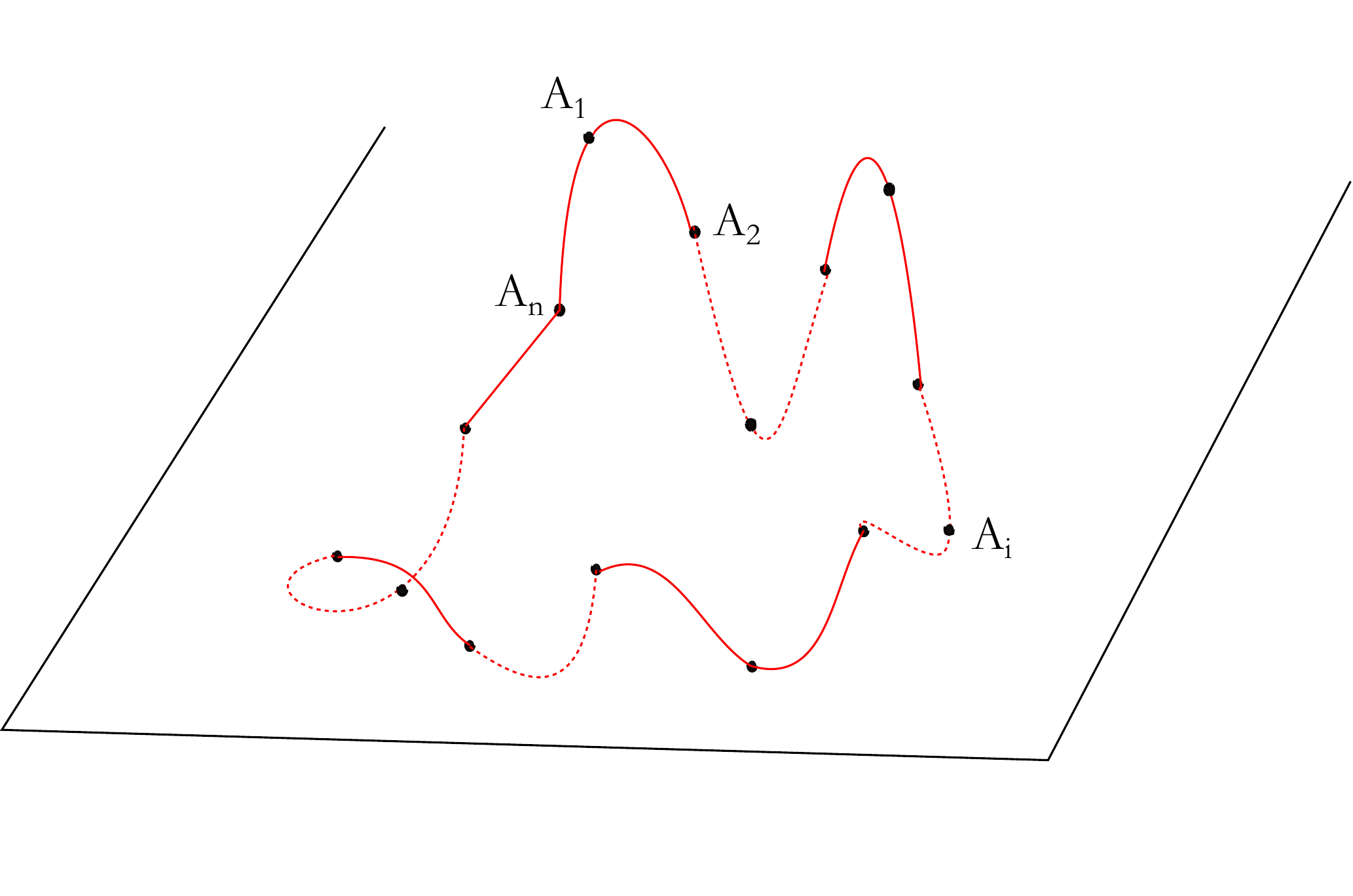}
\end{center}
\centerline{Figure 1}
\end{figure}

Our first two results are about  the existence of  the maximum of $E^f_n$ and where $E^f_n$ reaches the maximum. 

\noindent  {\bf Theorem 2.1.} 
{\it The maximum of $E^f_n$ exists on $\mathcal{G}_n$. }

\noindent  {\bf Theorem 3.1.}
{\it Each maximum point of $E^f_n$ is a convex $n$-gon (possibly degenerate) with each edge of length 1.} 
 
Then we restrict our function $f$ to be  the power functions $f_\alpha,\alpha\in \R $:  
$$f_\alpha(x)=\left\{
\begin{array}{ccc}
x^\alpha, \,\, & \alpha>0;\\
\ln x, \,\, & \alpha=0;\\
-x^\alpha, \,\, & \alpha<0.
\end{array}\qquad (1.2)
\right. $$

For simplicity, below we use  $E^{\alpha}_n$ to denote $E^{f_\alpha}_n$. 
Since  $f_\alpha:(0, \infty)\to \R$ is increasing and continuous, by Theorem 2.1 $E^\alpha_n$ reaches the the maximum on $\mathcal{G}_n$.


Below we use  $ \{\Gamma_n \}$  to denote the set of all convex $n$-gons
with each edge of length 1.   With Theorem 2.1 and Theorem 3.1, we transform Problem \ref{main}  to the following

\begin{problem}\label{main2}
What is the shape of $\Gamma_n$ when the energy $E^\alpha_n$ reaches the maximum? 
\end{problem}

\begin{figure}[htbp]
\begin{center}
\includegraphics[width=280pt, height=80pt]{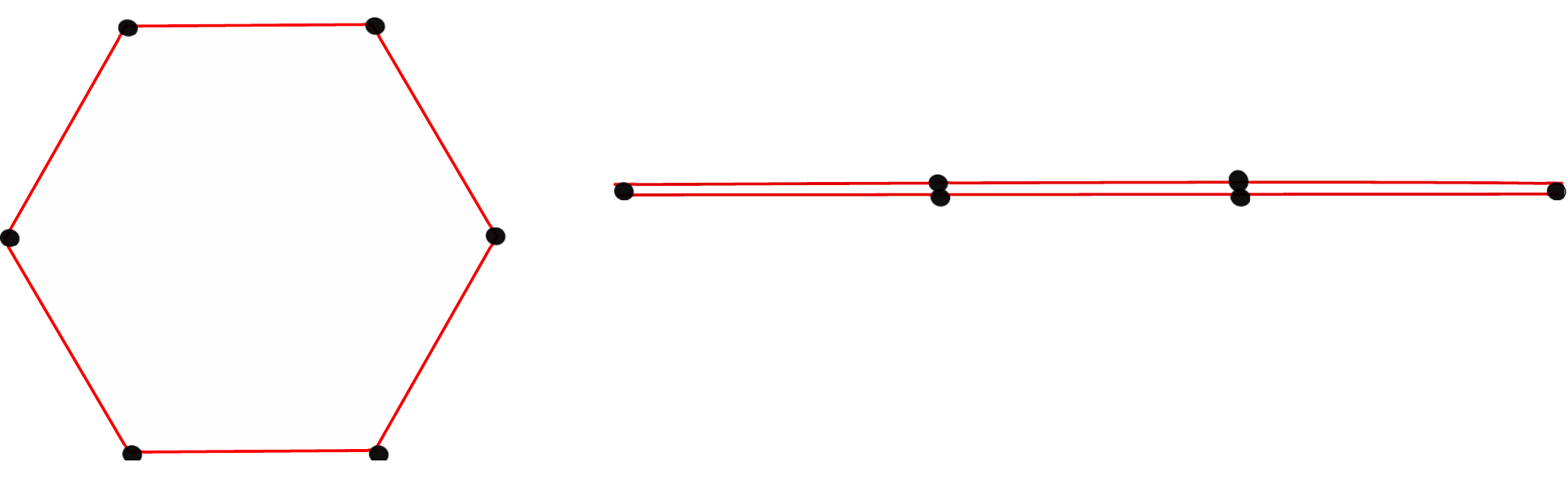}
\end{center}
\centerline{Figure 2}
\end{figure}

There are two extreme shapes for $\Gamma_n$: one is {\bf the regular $n$-gon $\Gamma_n^o$};  the other is {\bf the double straight arc $\Gamma_n^-$}, defined for only $n=2m$ , which can be  defined by
degenerated polygons where one of the diagonals has length $m$ ($\Gamma_{6}^-$ is shown in Figure 2, where two lines coincide indeed). 

It is easy to verify that,  for all $\alpha\in \R$, $E^{\alpha}_2(\Gamma_2)$ reaches the maximum at the double straight arc $\Gamma_2^-$, and
 $E^{\alpha}_3(\Gamma_3)$ reaches the maximum at the regular triangle $\Gamma_3^o$.
The first interesting case is $n=4$ and we have the complete answer: 

\noindent  {\bf Proposition 4.3.} $E^{\alpha}_4(\Gamma_4)$ reaches the maximum at the square $\Gamma_4^o$ for $\alpha<2$ and at the double straight arc $\Gamma_4^-$ for $\alpha>2$,  and $E^{2}_4(\Gamma_4)$ is a constant for all $\Gamma_4$ when $\alpha=2$.

Suggested by the classification in $n=4$, one may expect that for a given $n$,
there will be a constant $\alpha_*$ (respectively $\alpha^*$) so that the shape of $\Gamma_n$ realizing the maximum of $E^\alpha_n$ will be fixed when $\alpha< a_*$  (respectively $\alpha>\alpha^*$). One 
direction of this expectation is true,  and based on a pioneering work of Luko \cite{Luko}, we  will prove

\noindent  {\bf Theorem 5.1}
{For $\alpha\le 2$, $n\ge 5$, $E_n^{\alpha}(\Gamma_n)$ reaches its maximum if and only if $\Gamma_n$ is $\Gamma_n^o$, the regular $n$-gon of edge length 1.}

\begin{conjecture}\label{large enough}
For given even $n>0$, there is a constant  $\alpha_n^*>0$  such that for $\alpha>\alpha_n^*$, $E^\alpha_n(\Gamma_n)$ reaches the maximum if and only if  $\Gamma_n$ is the double straight arc $\Gamma_n^-$.
\end{conjecture}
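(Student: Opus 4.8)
We sketch a possible route to Conjecture~\ref{large enough}; write $n=2m$. The cases $n=2$ (where $\Gamma_2=\Gamma_2^-$ always) and $n=4$ (Proposition 4.3) are immediate, so assume $n\ge 6$. By Theorems 2.1 and 3.1 it suffices to maximize $E^\alpha_n$ over the compact set $\{\Gamma_n\}$ of congruence classes of convex unit--edge $n$--gons. The plan is to combine a compactness argument on $\{\Gamma_n\}$ with a second--order analysis at the degenerate polygon $\Gamma_n^-$.

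\textbf{Step 1: the diameter is at most $m$, with $\Gamma_n^-$ the unique maximizer.} For $\Gamma_n\in\{\Gamma_n\}$ and vertices $A_p,A_q$, the shorter boundary arc between them has $k\le m$ unit edges, so $|A_pA_q|\le k\le m$. If $|A_pA_q|=m$ then $k=m$ on \emph{both} arcs and each arc is a straight segment of length $m$ between $A_p$ and $A_q$; the two such segments coincide, so all $2m$ vertices lie on $[A_pA_q]$ at the integer points $0,1,\dots,m,m-1,\dots,1$, i.e.\ $\Gamma_n=\Gamma_n^-$. As $\mathrm{diam}(\cdot)$ is continuous on the compact set $\{\Gamma_n\}$, for any neighbourhood $U$ of $\Gamma_n^-$ one has $\mathrm{diam}\le m-\delta_U$ on $\{\Gamma_n\}\setminus U$ for some $\delta_U>0$. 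Since $E^\alpha_n(\Gamma_n)\le\binom{n}{2}\mathrm{diam}(\Gamma_n)^\alpha$ while $E^\alpha_n(\Gamma_n^-)\ge m^\alpha$ (from the single pair $A_1,A_{m+1}$ alone), for $\alpha>\log\binom{n}{2}/\log\tfrac{m}{m-\delta_U}$ every $\Gamma_n\notin U$ is beaten by $\Gamma_n^-$. It therefore remains to produce a neighbourhood $U$ and a threshold past which $E^\alpha_n(\Gamma_n)<E^\alpha_n(\Gamma_n^-)$ for all $\Gamma_n\in U\setminus\{\Gamma_n^-\}$; the larger of the two thresholds is then $\alpha_n^*$.

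\textbf{Step 2: the local analysis.} Put $\Gamma_n^-$ on the $x$--axis with $A_j$ at $\mathrm{pos}(j)\in\{0,\dots,m\}$, labelled so that the two $180^\circ$ turns sit at $A_1$ and $A_{m+1}$, and write a nearby convex unit--edge polygon as $A_j=(\mathrm{pos}(j)+\xi_j,\varepsilon\hat\eta_j)$. The edge relations force $\xi_{j+1}-\xi_j=\mp\tfrac12\varepsilon^2(\hat\eta_{j+1}-\hat\eta_j)^2+O(\varepsilon^4)$, and closure in the $x$--direction imposes $\sum_{j=1}^m(\hat\eta_{j+1}-\hat\eta_j)^2=\sum_{j=m+1}^{2m}(\hat\eta_{j+1}-\hat\eta_j)^2$. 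A direct computation gives
$$|A_1A_{m+1}|^2=m^2-\varepsilon^2 Q(\hat\eta)+O(\varepsilon^4),\qquad Q(\hat\eta):=m\sum_{j=1}^m(\hat\eta_{j+1}-\hat\eta_j)^2-\Big(\sum_{j=1}^m(\hat\eta_{j+1}-\hat\eta_j)\Big)^2\ \ge\ 0,$$
a Cauchy--Schwarz deficit, while each of the other $\binom{n}{2}-1$ pair distances changes only at order $\varepsilon^2$ with coefficient bounded in terms of $n$ (the pairs coincident in $\Gamma_n^-$ move $O(\varepsilon)$ but enter $E^\alpha_n$ as $O(\varepsilon^\alpha)=o(\varepsilon^2)$ for $\alpha>2$). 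Moreover $Q(\hat\eta)=0$ forces $\hat\eta$ affine on the first half and then, via the constraint, on the second half too, i.e.\ $\Gamma_n$ is a rigid rotate of $\Gamma_n^-$; so on the compact link of the deformation cone one has $Q(\hat\eta)\ge\mu\|\hat\eta\|^2$ for some $\mu=\mu(n)>0$. Hence
$$E^\alpha_n(\Gamma_n)-E^\alpha_n(\Gamma_n^-)=-\tfrac\alpha2\varepsilon^2\big(m^{\alpha-2}Q(\hat\eta)-R_\alpha(\hat\eta)\big)+o(\varepsilon^2),\qquad |R_\alpha(\hat\eta)|\le C(n)\,(m-1)^{\alpha-2}\|\hat\eta\|^2,$$
and the bracket is at least a positive multiple of $\|\hat\eta\|^2$ as soon as $(m/(m-1))^{\alpha-2}>C(n)/\mu$ --- i.e.\ for $\alpha$ past an explicit constant depending only on $n$.

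\textbf{The main obstacle.} What makes this more than bookkeeping is uniformity in $\alpha$: the remainder in $(1+t)^{\alpha/2}=1+\tfrac\alpha2 t+O(\alpha^2t^2)$ is not small once $\alpha\varepsilon^2\gtrsim 1$, so the expansion above is only legitimate in the regime $\alpha\varepsilon^2\le M$ for a fixed small $M$. In the complementary regime $\alpha\varepsilon^2>M$ (which forces $\alpha$ large on a fixed small $U$) one argues instead that $|A_1A_{m+1}|^2\le m^2-c\varepsilon^2$ gives $|A_1A_{m+1}|^\alpha\le m^\alpha e^{-c'\alpha\varepsilon^2}\le m^\alpha e^{-c'M}$, a definite fraction of $m^\alpha$, whereas on a small enough $U$ every \emph{other} pair has distance $<m-\tfrac32$; thus $E^\alpha_n(\Gamma_n)<m^\alpha e^{-c'M}+\binom{n}{2}(m-\tfrac32)^\alpha<E^\alpha_n(\Gamma_n^-)$ for $\alpha$ past a further explicit constant. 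Welding the two regimes together, with $U$ chosen small enough to make both arguments run, then yields a single $\alpha_n^*$. I expect the real work to lie precisely in Step 2 --- computing the second--order coefficients of all $\binom{n}{2}$ pair distances at the degenerate configuration and making the $\|\hat\eta\|^2$--lower bound on $Q$ genuinely uniform --- and in the careful gluing of the $\alpha\varepsilon^2\le M$ and $\alpha\varepsilon^2>M$ regimes into one threshold.
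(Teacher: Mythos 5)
First, note that the paper does not prove this statement: it is stated as Conjecture 1.3 and left open, supported only by the heuristic $\lim_{\alpha\to\infty}E_n^\alpha(\Gamma_n)^{1/\alpha}=\max_{p,q}|A_pA_q|$, so there is no proof of the authors' to compare against. Your Step 1 is correct and complete, and it already turns that heuristic into a rigorous statement: the bound $|A_pA_q|\le\min(k,n-k)\le m$ along the shorter boundary arc, with equality forcing both arcs to be straight and hence $\Gamma_n=\Gamma_n^-$, combined with compactness of $\{\Gamma_n\}$, shows that for every neighbourhood $U$ of $\Gamma_n^-$ there is a threshold of $\alpha$ beyond which all maximizers lie in $U$. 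That is genuinely more than the paper offers.

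The proposal as a whole, however, is a program rather than a proof, and what remains open is exactly what the conjecture hinges on: that $\Gamma_n^-$ is a strict local maximizer uniformly in all large $\alpha$. Concretely: (a) you never establish that every convex unit--edge $n$--gon near $\Gamma_n^-$ admits the parametrization $A_j=(\mathrm{pos}(j)+\xi_j,\varepsilon\hat\eta_j)$ with remainders uniform over the link; $\Gamma_n^-$ is a singular point of the configuration space (coincident vertices, all but two exterior angles sitting at the boundary value $0$), so this is not automatic. (b) The inequality $Q(\hat\eta)\ge\mu\|\hat\eta\|^2$ is false as written: $Q$ vanishes on the infinitesimal rotation ($\hat\eta_{j+1}-\hat\eta_j\equiv c$ on the top arc, $\equiv-c$ on the bottom), which has $\|\hat\eta\|\ne0$. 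It becomes true only after fixing a slice for the rotation action (e.g.\ $\hat\eta_1=\hat\eta_{m+1}$, in which case the closure constraint gives $Q=\tfrac m2\sum_j(\hat\eta_{j+1}-\hat\eta_j)^2$ exactly); you gesture at this, but the rotation quotient is precisely where the degenerate critical-point analysis lives and must be done carefully, including for deformations whose leading term is a pure rotation. (c) In the regime $\alpha\varepsilon^2>M$, the claim that every pair other than $A_1A_{m+1}$ has distance $<m-\tfrac32$ is wrong: the four pairs at arc-distance $m-1$ have distance about $m-1>m-\tfrac32$. The argument survives with any bound in $(m-1,m)$, say $m-\tfrac12$, since $\binom{n}{2}(1-\tfrac1{2m})^\alpha\to0$. (d) The uniform-in-$\alpha$ gluing of the two regimes, which you yourself identify as the main obstacle, is only described, not carried out. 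I believe the strategy is sound and would likely settle the conjecture if (a)--(d) were executed, but as it stands the statement remains unproved.
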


\subsection{Related results and motivations}

There are many results related to  and prior to  our results.

(1)  
A continuous  version of the problem we considered have been studied over the years:
Given a unit speed curve $c: S^1\to \R^3$ and defining the energy functionals introduced by
O’Hara in \cite{OH}:
$$E_j^p(c)=\int \int \left( \frac 1{|c(s)-c(t)|^j}-\frac 1{d(s,t)^j} \right)^p dt ds$$

  What is the shape of the mimimizers? O’Hara conjectured  the minimizers are round circles when $pj > 2$. Freedman, He, and Wang \cite{FHW} showed that minimizers are convex planar curves and in the special case of the energy $E^2_1$ by showing it is Mobius invariant and used this to show that in this case O’Hara’s conjecture is correct. The O’Hara conjecture was verified in the paper \cite{ACFGH} of Abrams, Cantarella, Fu, Ghomi, and Howard. The current state of understanding about these questions is contained in \cite{ACFGH} and the paper of Exner, Harrell, and Loss \cite{EHL}. In both of these papers it is noted there is ``symmetry breaking" phenomena:  for the energy
  $$E^p(c)=\int \int |c(t)-c(s)|^pdsdt,$$
$E^p$ is maximized by the round circle for $-1<p \le2,$ and for some $p^*\ge 2$ the circle
is no longer the maximizer for $p > p^* $. These papers give some numerical 
calculations estimating and bounding the value of $p^*$. That $E^p$ is maximized by circles for $-1 < p \le 2 $ for planar curves was already in a 1966 paper of Luko \cite{Luko}.

Theorem 3.1, together with Theorem 2.1,  is an analogue of the result of Freedman, He, and Wang mentioned above and which is a meaningful 
 result in its own right. Our proof in the discrete case has more  geometric flavor 
 and somewhat trickier. 

Theorem 5.1 for $n\ge 5$ is an analogue of the result of  Luko mentioned above. In discrete case, we do not have to worry about the convergence
of the integral, so the condition $p>-1$ mentioned above is not needed.

Proposition 4.3 for $n=4$ indicates the  “symmetry breaking''  
mentioned above is happened exactly at $p=2$, and even stronger,
the maximizer suddenly changed from the square to the double straight arc.
Theorem 5.1 and Conjecture 1.3 predicate  the symmetry breaking phenomena for general $n$.
A supporting evidence of Conjecture 1.3 is  
$$\lim_{\alpha\to \infty} E_n^\alpha(\Gamma_n)^{1/\alpha}= \max_{1\le p, q\le n} |A_pA_q|$$
which implies that for large $\alpha$ the maximizers will tend to maximize
the diameter.

As pointed out by the reviewers, some of the results of \cite{Sal} and \cite{CDR} resemble our Theorem 3.1. In the case of \cite{Sal} the setting (discrete vs continuous) is different and in \cite{CDR} they only consider the two dimensional case and do not allow for the double straight arcs $\Gamma_n^-$. As $\Gamma_n^-$ are the maximizers for some choices of the functional $E_n^f$ the results of \cite{CDR} do not directly imply the results here.


(2) Other related problems and papers including:

The Thomson type problem 
considers the distribution of $n$ points on the unit sphere in $\R^3$ under essentially the same energy functions
$f_\alpha$ given by (1.2).  The problem was first 
raised by Thomson for $\alpha=-1$  for his atomic model \cite{Th}, and was later generalized to all $\alpha\in \R$. 
Smale put Thomson's  problem in his 18 problems  for 21st century \cite{Sm}. 
Little is known about Thomson's  problem:
for example, for $\alpha= 1, -1$, the shape realizing the maximum energy  is known only for $n\le 5$. Some related papers are \cite{BG},  
\cite{KuSa}, \cite{KaSh},  \cite{PB},  \cite{HS}, \cite{Sch}. 

 Many people have studied the distribution of $n$ points with mutual  distances $\le 1$ under the  energy functions
$f_\alpha$ given by (1.2) for $\alpha>0$. The case  $\alpha=1$, where the energy function is the sum of the mutual  distances,  was first considered by Toth for points with mutual distances $\le 1$ \cite{To}, and later generalized to all $\alpha>0$. Some related papers are \cite{Wi},  \cite{Pi},  \cite{LP},  \cite{St},  \cite{AGHPMP}.

(3) Some inspiration from the  physics:
The total energy $E^\alpha_n$ we studied has physical meaning for  $\alpha=-1$ and $\alpha
=2$: 
For $\alpha=-1$,
$-E^{-1}_n(\Gamma_n)$ is the total electric potential energy, 
where each vertex of $G_n$ has a unit charge, and there is no charge on the edges.  
For $\alpha=2$, 
$E^{2}_n(\Gamma_n)$ is the moment of inertia  
about its mass center, where each vertex of $\Gamma_n$ has a unit mass, and there is no mass on the edges (see $\S$4.3).
Theorem \ref{main3} for $\alpha=2$ implies that $\Gamma_n$ reaches the maximum moment of inertia about its mass  center at the regular
$n$-gon $\Gamma^o_n$. 
This matches some of our observations: If a dancer spins rapidly, 
or  someone  rotates a necklace quickly with one finger
and then throws it out, then the shapes of the bottom edge of the dress and the necklace will be a regular $n$-gon or a round circle.


{\bf Acknowledgement:} The paper is greatly benefited from the reviewers's advice:

(1)   Subsection 1.2 (1),  the continuous  version of the problem we considered, its current state and connections to our work,  is  written mostly following the reviewers's report.

(2) The reviewers's suggestions make the proof  of Proposition 3.2  (iii) clearer.

(3) Theorem 5.1 was stated as a conjecture in the early version of the paper, and we obtained some partial results of the conjecture. Our approach 
is based on to decompose the total energy $E^\alpha_{n}$ into the sum of $k$-step energies $E^\alpha_{n,k}$ (see the beginning of Section 5) and then apply Jensen inequality to each $k$-step energy. The reviewers  pointed out that such approach has been used by Luko
in 1966. The  reviewers even expected that the conjecture can be derived from Luko's work. Then we have Theorem 5.1.

We thank the  reviewers for their advice.


\section{The existence of the maximum for $E^f_{n}$.}\label{maximum}

\begin{theorem}\label{exist1}  
Let  $f:(0,\infty) \to \R$ be a continues and increasing function. 
The maximum of $E^f_n$ defined by (1.1) exists on $\mathcal{G}_n$. 
\end{theorem}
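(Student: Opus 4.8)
The plan is to show that $E^f_n$ attains its maximum by exhibiting $\mathcal{G}_n$ (or a suitable reduction of it) as a compact space on which $E^f_n$ is continuous, and then invoking the extreme value theorem. The first step is to set up coordinates: a configuration $G_n$ is determined by the positions $A_1,\dots,A_n \in \R^3$ of the particles together with the arcs joining consecutive ones, but since $E^f_n$ depends only on the tuple $(A_1,\dots,A_n)$, I would pass to the space of admissible point-tuples. The constraint imposed by the non-extensible curve of length $n$ with evenly spaced particles is exactly that $|A_iA_{i+1}| \le 1$ for all $i$ (indices mod $n$): given any such tuple one can join consecutive points by a segment of length $|A_iA_{i+1}|\le 1$ and then pad with a detour so each arc has length exactly $1$, realizing a genuine $G_n$. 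So the relevant domain is
$$X_n = \{(A_1,\dots,A_n)\in(\R^3)^n : |A_iA_{i+1}|\le 1,\ i=1,\dots,n\}.$$

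Next I would address non-compactness coming from translations (and the fact that $X_n$ is unbounded). Since $E^f_n$ is translation-invariant, I would normalize $A_1 = 0$. Then the chain condition $|A_iA_{i+1}|\le 1$ forces $|A_i|\le i-1 \le n-1$ for every $i$, so the normalized configuration space $\widetilde X_n = X_n \cap \{A_1 = 0\}$ is a closed and bounded, hence compact, subset of $(\R^3)^n$. The energy $E^f_n$ descends to a well-defined function on $\widetilde X_n$, and its supremum over $\widetilde X_n$ equals its supremum over all of $\mathcal{G}_n$.

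The remaining point, and the one needing a little care, is continuity of $E^f_n$ on $\widetilde X_n$: each summand $f(|A_pA_q|)$ is continuous in $(A_p,A_q)$ wherever $|A_pA_q| > 0$ since $f$ is continuous on $(0,\infty)$, but $\widetilde X_n$ contains degenerate configurations where $A_p = A_q$ for some non-adjacent pair, and there $f$ need not extend continuously (e.g. $f = \ln$ or $f = -x^{-1}$ blows up to $-\infty$). I would handle this by a truncation/monotone-convergence argument: for $\varepsilon>0$ let $f_\varepsilon(t) = f(\max(t,\varepsilon))$, which is continuous and bounded below on $[0,\infty)$, so $E^{f_\varepsilon}_n$ is genuinely continuous on the compact set $\widetilde X_n$ and attains a maximum at some point $P_\varepsilon$. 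As $\varepsilon \downarrow 0$, $f_\varepsilon \uparrow f$ pointwise (using that $f$ is increasing), and one shows the maxima converge: extracting a convergent subsequence $P_{\varepsilon_k}\to P_*$, one checks via the monotonicity that $E^f_n(P_*) = \sup_{\widetilde X_n} E^f_n$, provided the sup is finite; if some configuration had $E^f_n = +\infty$ that would require $f$ unbounded above near a finite argument, which an increasing continuous function on $(0,\infty)$ is not on any bounded set, so the sup is finite. The main obstacle is thus not the topology but precisely this possible singular behaviour of $f$ at $0$ on degenerate polygons; once that is dispatched by the truncation argument, compactness plus the (now justified) upper semicontinuity of $E^f_n$ gives the existence of the maximum. (Alternatively, and perhaps more cleanly, one argues directly that $E^f_n$ is upper semicontinuous on $\widetilde X_n$: each term $f(|A_pA_q|)$ is continuous off the diagonal and bounded above near the diagonal, and $\limsup$ at a diagonal point is $\le$ the value obtained by declaring $f(0) := \lim_{t\to 0^+}f(t)\in[-\infty,\infty)$, so $E^f_n$ is u.s.c.; a u.s.c. function on a nonempty compact set attains its supremum.)
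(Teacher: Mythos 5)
Your proof is correct in substance and shares the paper's overall skeleton: reduce to the space of point-tuples with $|A_iA_{i+1}|\le 1$, normalize $A_1=0$ to get a closed bounded (hence compact) subset of $(\R^3)^n$, and then deal with the failure of continuity of $f(|A_pA_q|)$ where points collide. (Your explicit remark that every such tuple is realized by an actual curve in $\mathcal{G}_n$ by padding each chord to an arc of length $1$ is a point the paper leaves implicit.) Where you genuinely diverge is in handling the singularity at $0$. The paper first splits on whether $\lim_{x\to 0^+}f(x)$ is finite or $-\infty$; in the latter case it restricts to the compact set $B^\epsilon$ of configurations with all pairwise distances $\ge\epsilon$, gets a maximum there, and then shows by an explicit comparison --- choosing $\epsilon$ with $f(\epsilon)<E^f_n(\Gamma_n^o)-(C_n^2-1)f(n-1)$ --- that any configuration outside $B^\epsilon$ has energy below that of the regular $n$-gon, so the max on $B^\epsilon$ is the global max. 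Your truncation $f_\varepsilon(t)=f(\max(t,\varepsilon))$ and the upper-semicontinuity argument achieve the same end more abstractly, with no case split and no need to exhibit a competitor configuration; the paper's route, in exchange, yields the extra quantitative fact that any maximizer lies in $B^\epsilon$ for an explicit $\epsilon$. One slip to fix: as $\varepsilon\downarrow 0$ you have $f_\varepsilon\downarrow f$ pointwise (since $f$ is increasing, $f(\max(t,\varepsilon))\ge f(t)$ and decreases as $\varepsilon$ shrinks), not $f_\varepsilon\uparrow f$ as written. This is actually in your favour: $E^f_n=\inf_{\varepsilon>0}E^{f_\varepsilon}_n$ is then an infimum of continuous functions, hence upper semicontinuous, and your parenthetical u.s.c.\ argument closes the proof cleanly; but the convergence-of-maximizers step should be rewritten with the correct direction of monotonicity.
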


\begin{proof}
We use $(x_1, x_2, ...,x_n)$, where each $x_i$ is a vector in $\R^3$, to denote the vertices of
 $G_n\subset \R^3$. 
 
 For $i \in \{1,2,...,n\}$ define $B_j$ to be the subset of $(\R^3)^n$
$$B_i :=\{(x_1,x_2,...,x_n)||x_{i+1}-x_i|\le 1\}$$ 
for $i<n$ and $B_n :=\{(x_1,x_2,...,x_n)||x_1-x_n|\le 1\}$.
 

Positions of $(x_1,x_2,...,x_n)$ form a subset $B'\subset (\R^3)^n$ which is defined by
$$B'=\bigcap_{i=1}^n B_i.$$

Since each $B_i$, defined by $\le$,  is closed, their intersection $B'$ is closed.
Since $E_n^f(G_n)$ is invariant under Euclidean transformations, so we may assume that $x_1=0$.
Note $B''\subset (\R^3)^n$ defined by $x_1=0$ is also a closed subset. Let 
$$B=B'\cap B''.$$ 
$B$  is also closed.

To consider the value of $E_n^f$, we need only restrict our attention on $B$.
Since $|x_i-x_1|\le i-1 < n,$
we have $|x_i| < n$,  so
$$d((x_1, x_2, ...., x_n), 0)^2=|x_1|^2+|x_2|^2+...+|x_n|^2\le n^3,$$
where $d$ is the distance of $(\R^3)^d=\R^{3d}$, hence $B$ is bounded.
By Heine-Borel theorem \cite{Ar}, as a closed bounded subset of Euclidean space, $B$ is compact.

Recall $f:(0, \infty)\to \R$ is increasing and continuous. Since $f$ is increasing, as $x\to 0_+$, we have
 either (i) $f(x)\to c$ for some constant $c$, or (ii) $f(x) \to -\infty$.

In case (i),  $f$ is continuous on $[0, \infty)$, hence $E_n^f$ is continuous on the compact set $B$. So $E_n^f$ has a maximum on $B$.

If case (ii), now for $i\ne j$ and some $\epsilon> 0$, let
$$B_{i,j}^\epsilon=\{(x_1,x_2,...,x_n)|\,\,|x_i-x_j|\ge\epsilon\},$$
then $B_{i,j}^\epsilon$ is a closed subset. Let
$$B^\epsilon=B\cap(\bigcap_{i, j}B_{i,j}^\epsilon).$$
As a closed subset of a compact set $B$, $B^\epsilon$ is compact.
For any $(x_1, x_2,..., x_n)\in B^\epsilon$,  $|x_i-x_j| \ge \epsilon$ for any $i \ne j$, so $E_n^f$ is defined on $B^\epsilon$.
By the same reason as before, $E_n^f$ reaches maximum on $B^\epsilon$.

Once the (ordered) vertices of $G_n$ belong to $B^\epsilon$, we  simply  write $G_n\in B^\epsilon$.
Below we assume that $\epsilon <1$.  Then the regular $n$-gon $\Gamma_n^o\in B^\epsilon$.
Recall that $f$ is increasing on $(0,\infty)$, and $f(x) \to -\infty$ as $x\to 0$. First we have $f(|x_i-x_j|)\le f(n-1)$
since $|x_i-x_j|\le n-1$. Denote $f(n-1)$ by $K$. 
Next we can pick $\epsilon>0$ so that $f(\epsilon)<E_n^f(\Gamma_n^o)-(C_n^2-1) K$. 
If $G_n\notin B^\epsilon $, then $|x_k-x_m|<\epsilon$ for some $k \ne m$. Therefore 
$$E_n^f(G_n)=\sum_{i<j}f(|x_i-x_j|)\le f(|x_k-x_m|) +\sum_{i<j, (i,j)\ne (k ,m)}f(|x_i-x_j|) $$$$<f(\epsilon)+(C_n^2-1)K<E_n^f(\Gamma_n^o).$$
So the value of $E_n^f$ on $B\setminus  B^\epsilon$ is bounded by $E_n^f(\Gamma_n^o)$. Since $\Gamma_n^o\in B^\epsilon$, the maximum value of  $E_n^f$ on $B$ is the maximum value on $B^\epsilon$.
So  the maximum value of $E_n^f$ exists.
\end{proof}

Since $f_\alpha$ is continuous and increasing on $(0,\infty)$ for all $\alpha\in \R$, by Theorem 2.1 we have

\begin{corollary}\label{exist}
For each $\alpha$ and $n$,   $E^\alpha_n$ reaches the maximum on $\mathcal{G}_n$.
\end{corollary}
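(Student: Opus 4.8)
The plan is to observe that this corollary is not a new argument at all but a direct specialization of Theorem~\ref{exist1}: I only need to check that for every $\alpha\in\R$ the function $f_\alpha$ defined in (1.2) meets the two hypotheses of that theorem, namely that $f_\alpha\co(0,\infty)\to\R$ is continuous and increasing. Once that verification is in place, Theorem~\ref{exist1} applies verbatim with $f=f_\alpha$ and yields that $E^{f_\alpha}_n=E^\alpha_n$ attains its maximum on $\mathcal{G}_n$.

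So the first step is the case analysis dictated by the piecewise definition of $f_\alpha$. For $\alpha>0$, $f_\alpha(x)=x^\alpha$ is continuous on $(0,\infty)$ and has derivative $\alpha x^{\alpha-1}>0$ there, hence is (strictly) increasing. For $\alpha=0$, $f_0(x)=\ln x$ is the standard example of a continuous, strictly increasing function on $(0,\infty)$. For $\alpha<0$, note that $x\mapsto x^\alpha$ is continuous and strictly decreasing on $(0,\infty)$ (its derivative $\alpha x^{\alpha-1}$ is negative), so $f_\alpha(x)=-x^\alpha$ is continuous and strictly increasing. The sign convention in (1.2) is precisely what is needed to keep monotonicity uniform across the three ranges of $\alpha$; this is the only mildly delicate point, and it is routine.

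The second and final step is simply to invoke Theorem~\ref{exist1}. There is essentially no obstacle here: the work of establishing compactness of the relevant configuration space and of handling the boundary behavior $f(x)\to-\infty$ (which is exactly what happens for $\alpha\le 0$) has already been carried out in the proof of Theorem~\ref{exist1}, so nothing further is required. If one wanted to be slightly more self-contained, one could remark that cases $\alpha>0$ fall under case (i) of that proof (since $f_\alpha(x)\to 0$ as $x\to 0_+$), while cases $\alpha\le 0$ fall under case (ii) (since $f_\alpha(x)\to-\infty$), but even this is optional. The whole corollary is therefore a two-line deduction, and I would present it as such.
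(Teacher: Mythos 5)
Your proposal is correct and matches the paper exactly: the paper derives the corollary in one line from Theorem~\ref{exist1} by noting that $f_\alpha$ is continuous and increasing on $(0,\infty)$ for every $\alpha$ (a fact also recorded in Lemma~\ref{concon}). Your extra case-by-case verification of monotonicity and the remark about which case of the proof of Theorem~\ref{exist1} applies are harmless elaborations of the same argument.
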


\section{Each maximum point of $E^f_n$ is a convex $n$-gon (which may be degenerate) with each edge of length 1.}\label{shape}

A subset $X\subset \R^3$ is convex if it contains the line segments connecting each pair of its points. 
The convex hull of $X$ is the (unique) minimal convex set containing $X$.
Suppose $S$ is a set of finitely many points.
The boundary of the convex hull of  $S$ forms a convex polytope and forms a convex polygon if  $S\subset \R^2$.

When the points of $S$
are in a line in $\R^2$,  Let $P_1$ and $P_2$ be the outermost two points. We will consider the convex hull of $S$  as a degenerated convex polygon, who's boundary is still a closed curve which
consists of  two coincided straight arcs connecting $P_1$ and $P_2$,  and the exterior angles at $P_1$ and $P_2$
are $\pi$, see the right of Figure 3. 

\begin{figure}[htbp]
\begin{center}
\includegraphics[width=250pt, height=80pt]{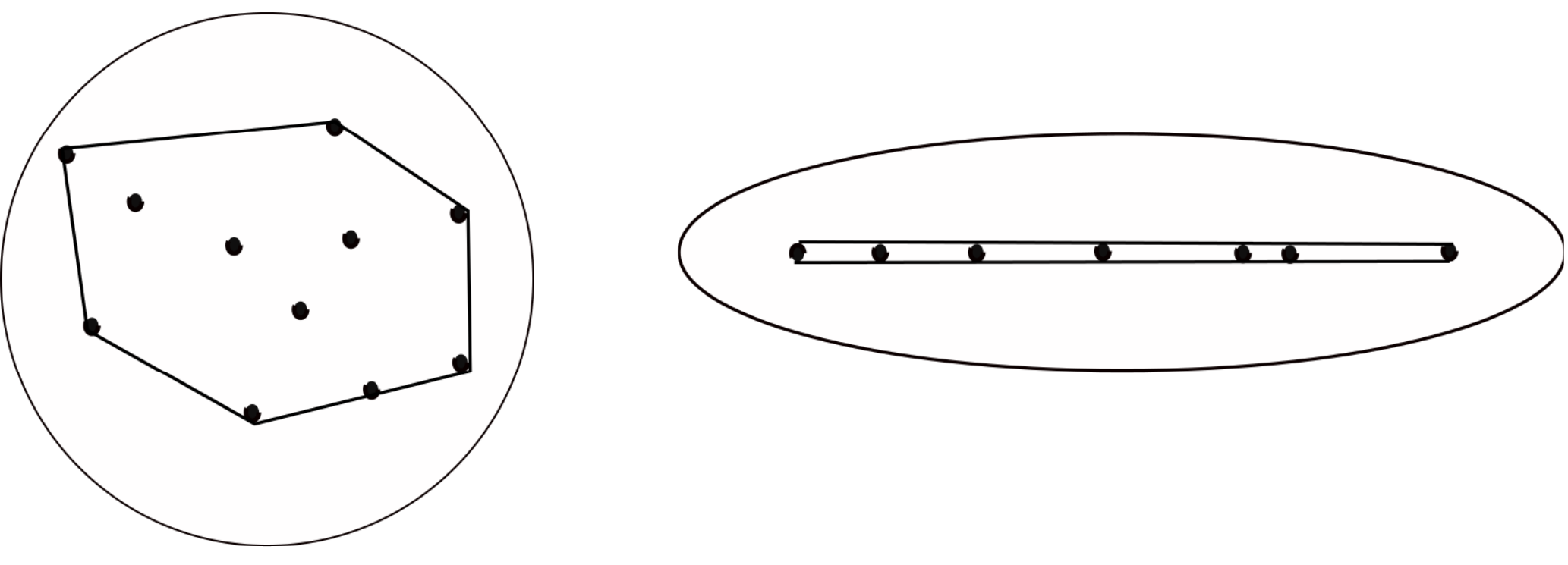}
\end{center}
\centerline{Figure 3}
\end{figure}

\begin{theorem}\label{convex1}
Each maximum point of $E^f_n$ is a convex $n$-gon (which may be degenerate) with each edge of length 1.
\end{theorem}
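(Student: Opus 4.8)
I would organize the argument around three claims about any maximizer: (i) its vertices are coplanar; (ii) they are in convex position and occur around the boundary of their convex hull in the cyclic order $A_1,\dots,A_n$; (iii) $|A_iA_{i+1}|=1$ for every $i$. Each is proved by the same strategy — if the claim fails, produce a new element of $\mathcal{G}_n$ with strictly larger energy, contradicting maximality — and only the strict monotonicity of $f$ is used, since any move that strictly increases every pairwise distance it changes and fixes the rest strictly increases $E^f_n$.

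The workhorse for (i) and (ii) is reflection of an arc. Suppose $A_j$ and $A_k$ are \emph{not} cyclically adjacent and $H$ is a hyperplane (a plane of $\R^3$ in case (i); a line in the common plane of the vertices in case (ii)) with $A_j,A_k\in H$ and every other vertex on one closed side of $H$. Reflecting the arc $A_{j+1},\dots,A_{k-1}$ across $H$ keeps all $n$ edge lengths — the two edges meeting the arc are unchanged because $A_j,A_k\in H$ — so the new point set is again in $\mathcal{G}_n$; and for $A_p$ in the reflected arc and $A_q$ in the complementary arc one computes $|A'_pA_q|^2-|A_pA_q|^2=4h_ph_q$, where $h_p,h_q$ are the signed distances of $A_p,A_q$ to $H$. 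Since $A_p,A_q$ lie on the same side, $h_ph_q\ge 0$, strictly when neither is on $H$, so $E^f_n$ strictly increases as soon as each arc contains a vertex off $H$. The remaining point is the combinatorial–geometric claim: \emph{unless $A_1\cdots A_n$ is already a planar convex polygon with its vertices in cyclic order, such a pair $(A_j,A_k)$ and hyperplane $H$ exist.} I would get this from the convex hull — take an edge of the hull polytope (if the points are not coplanar) or of the hull polygon (if they are) and a generic supporting hyperplane through it, so that only its two endpoints lie on $H$ — together with a count: if $V$ vertices lie on the hull then it has $E\ge\tfrac32 V$ edges in the $3$‑dimensional case and $E=V$ in the planar case, whereas at most $V-1$ hull edges can be edges of $A_1\cdots A_n$ when $V<n$, and when $V=n$ all $n$ hull edges are polygon edges only if the polygon coincides with the hull. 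So in every remaining case some hull edge joins a non‑adjacent pair, and then both arcs are nonempty and disjoint from $H$.

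For (iii) we may assume, by (i) and (ii), that $A_1\cdots A_n$ is a planar convex polygon in cyclic order, and we suppose some edge — say $A_nA_1$ — has length $<1$. If the polygon is not a segment, some vertex $A_i$ with $2\le i\le n-1$ has interior angle $<\pi$; flex the arc $A_1A_2\cdots A_n$, keeping all its edge lengths and opening the angle at $A_i$ by a small amount. By Cauchy's arm lemma (equivalently the discrete Schur comparison theorem) this strictly increases every distance $|A_pA_q|$ with $p\le i\le q$ — in particular $|A_{i-1}A_{i+1}|$ and $|A_nA_1|$ — and changes no other distance; since $|A_nA_1|$ was $<1$ the perturbed configuration is still in $\mathcal{G}_n$, so $E^f_n$ increases, a contradiction. (If instead every edge were $<1$, a dilation by a factor slightly larger than $1$ already raises $E^f_n$.)

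The step I expect to be genuinely delicate is the degenerate geometry, where the reflection move collapses: collinear point sets (including $\Gamma_n^-$) and configurations with coincident vertices. For a collinear maximizer, a perpendicular lift of any vertex both of whose edges are slack strictly increases all distances from it, forcing every edge to be tight; and among collinear configurations with unit edges a rearrangement/majorization argument on the multiset of pairwise distances shows $E^f_n$ is largest for the monotone "out‑and‑back" configuration, i.e. $\Gamma_n^-$ (which exists only for even $n$) — the announced degenerate convex $n$‑gon with unit edges. Comparable care is needed in (i)–(ii) when vertices other than $A_j,A_k$ happen to lie on the chosen hyperplane $H$ (re‑choose the hull face, or argue directly). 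Making this degenerate bookkeeping and the hull‑edge counting fully airtight is, I expect, where the real work lies; the three basic moves — reflect an arc, lift a vertex, open an angle — are each short.
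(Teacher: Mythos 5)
Your toolkit is the paper's: reflect an arc across a supporting hyperplane through two non-adjacent vertices, and open an angle at a convex vertex; the identity $|A'_pA_q|^2-|A_pA_q|^2=4h_ph_q$ and the observation that only monotonicity of $f$ is needed are both correct and are exactly how the paper argues steps (i) and (iv). The genuine gap is in your step (ii), and it is not deferred bookkeeping but the core of the proof. Two concrete problems. First, the counting claim that ``at most $V-1$ hull edges can be edges of $A_1\cdots A_n$ when $V<n$'' is false once vertices may coincide: a closed walk of length $n=2V$ that traverses the boundary of a convex $V$-gon twice realizes all $V$ hull edges as polygon edges, so your argument produces no non-adjacent pair $(A_j,A_k)$ joined by a hull edge, and such configurations (legitimate elements of $\mathcal{G}_n$ whenever $f$ extends continuously to $0$) are not excluded. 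Second, even when such a hull edge exists, your reflection is strictly improving only if each arc contains a vertex strictly off $H$; but any supporting hyperplane through a hull edge contains the whole edge, so the move degenerates precisely when vertices lie in the relative interior of a hull edge or coincide with $A_j$ or $A_k$. These are not removable pathologies: the theorem's own conclusion admits degenerate maximizers $\Gamma_n^-$ in which almost all vertices lie inside the hull segment, so your stated conclusion of (ii) --- convex position, visited in the cyclic order $A_1,\dots,A_n$ --- is not only unproved but false as literally stated without the paper's convention for degenerate polygons.

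What is missing is the paper's substitute for this step (its Proposition 3.2 (iii) and the reordering in (iv)). The paper proves only that all vertices lie on $\partial C$ and explicitly allows the order along $\partial C$ to differ from the order in $G_n$; it then shows (a) for two vertices $A_i,A_{i+k}$ on a hull edge $L$, one of the two arcs of $G_n$ between them lies entirely in $L$ (otherwise reflect), and (b) if no vertex lies strictly between them on $L$, some single polygon edge $A_{i+j-1}A_{i+j}$ contains the segment $A_iA_{i+k}$, hence $|A_iA_{i+k}|\le 1$. Point (b) is what licenses re-ordering the $n$ vertices to follow $\partial C$ (with multiplicities) while remaining in $\mathcal{G}_n$ and without changing the energy, after which the angle-opening move applies. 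Your proposal contains no argument playing the role of (a)+(b), and the ``rearrangement/majorization'' assertion for collinear configurations is likewise unproved --- for a general increasing $f$ it amounts to showing the sorted distance multiset of the out-and-back walk dominates that of every other closed unit-step walk on a line, which needs an argument. Until these pieces are supplied the proof is incomplete exactly where the paper does its real work.
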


Recall {\bf the vertices $A_1,..., A_n$ are cyclicly consecutive in $G_n$}. 
Theorem \ref{convex1} follows from the following proposition whose statement gives the steps of the proof.

\begin{proposition}\label{convex2}
Suppose $E^f_n$ reaches the maximum at  $G_n$. Then 

(i) All vertices of $G_n$ are in the same plane.

(ii) Let $C$ be the 2-dimensional convex hull of all vertices of $G_n$, then all vertices of  $G_n$ stay in $\partial C$, the boundary of $C$. 

(iii) Suppose $A_i$ and $A_{i+k}$ are two vertices of $G_n$ in an edge $L$ of $\partial C$ such that there are no vertices of $G_n$ between $A_i$ and $A_{i+k}$ in $L$, then the distance between $A_i$ and $A_{i+k}$ is no more than $1$.


(iv)  $G_n$ is a convex $n$-gon of edge length 1.
\end{proposition}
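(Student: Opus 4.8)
The plan is to prove the four parts in sequence, each one reducing the problem to a smaller ambient situation by means of a "local move" argument: if the configuration fails the asserted property, we exhibit a deformation of $G_n$ (respecting the length-$\le 1$ edge constraints) that strictly increases $E^f_n$, contradicting maximality. The common mechanism throughout is that $f$ is \emph{increasing}, so any move that increases one or more pairwise distances $|A_pA_q|$ while decreasing none will strictly increase the energy.

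\medskip

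\noindent\textbf{Part (i): coplanarity.} Suppose the vertices of $G_n$ do not all lie in a plane. The idea is to pick a vertex $A_i$ that is "extreme" in some direction transverse to an affine hyperplane spanned by the others, or more carefully: take the convex hull of all $n$ vertices in $\R^3$, which is then a genuine $3$-dimensional polytope, and let $A_i$ be a vertex of this polytope. I would translate $A_i$ orthogonally toward the opposite supporting plane — equivalently, reflect or "flatten" the configuration. The cleaner route is: if the points are not coplanar, there is a line $\ell$ such that folding the configuration along a plane through $\ell$ can be arranged to increase all distances from the moved vertices to the fixed ones. The careful implementation: choose a plane $H$ so that at least one vertex lies strictly on one side; reflect everything on that side across $H$; distances within each side are preserved, distances across $H$ are preserved or increased, and the two neighbours-in-$G_n$ constraints $|A_jA_{j+1}|\le 1$ are preserved because reflection is an isometry on each piece and the cyclic edges crossing $H$ only get their lengths changed in a controlled way — here one must choose $H$ to contain the relevant edges or argue that a small perturbation suffices. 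This bookkeeping about which edges cross $H$ is the first delicate point; I expect the paper handles it by choosing $H$ through two vertices that are adjacent in the cyclic order, so that at most the edges incident to those two vertices are affected and those can be kept of length $\le 1$.

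\medskip

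\noindent\textbf{Parts (ii) and (iii): pushing interior vertices to the boundary, and long edges.} Now work in the plane $\R^2$. For (ii), suppose some vertex $A_i$ lies in the interior of the convex hull $C$ (or in the relative interior of an edge but this is allowed). If $A_i$ is strictly interior, its two cyclic neighbours $A_{i-1},A_{i+1}$ lie in $\bar C$, and I would move $A_i$ away from the segment $A_{i-1}A_{i+1}$, perpendicular to it, in the direction pointing out of $C$. This increases $|A_iA_p|$ for every other vertex $A_p$ (since all $A_p\in C$ lie on the far side of, or on, a supporting line), strictly for at least one $p$, while $|A_{i-1}A_i|$ and $|A_iA_{i+1}|$ change; to keep these $\le 1$ one moves $A_i$ along the perpendicular bisector direction only if that preserves the neighbour distances, otherwise one first argues we may assume $|A_{i-1}A_i|,|A_iA_{i+1}|<1$ strictly (else treat $A_i$ as rigidly attached) and then a small such move is legal. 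For (iii), if consecutive-on-an-edge vertices $A_i,A_{i+k}$ have $|A_iA_{i+k}|>1$, then since they are joined along the boundary edge $L$ by $k$ sub-edges each of length $\le 1$ there must be intermediate vertices — contradiction with "no vertices between them" unless $k=1$, and if $k=1$ then $|A_iA_{i+1}|\le 1$ by the standing constraint $|A_iA_{i+1}|\le 1$ stated in (1.1). So (iii) is essentially immediate from the hypothesis and the edge constraint; the real content is that collinear runs on $\partial C$ behave well.

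\medskip

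\noindent\textbf{Part (iv): edges have length exactly $1$.} By (i)--(iii) the maximizer is a convex polygon inscribed in $\partial C$ with all cyclic edges of length $\le 1$. Suppose some edge $|A_iA_{i+1}|<1$. The plan is to "open up" the polygon at this edge: keep the arc $A_{i+1}A_{i+2}\cdots A_i$ rigid and rotate it slightly about $A_{i+1}$ (hinge move) so as to increase $|A_iA_{i+1}|$ toward $1$; by convexity this increases the distance from $A_i$ to every vertex on the far arc while leaving all distances within the rigid arc unchanged and keeping every edge length $\le 1$. Since $f$ is increasing and strictly so on the relevant scale (it is increasing; strict increase on some subinterval is all we need to get a strict energy gain, and if $f$ is constant on an interval the statement "each edge of length $1$" should be read as "can be taken to have each edge of length $1$" — I expect the paper phrases the conclusion as: some maximizer, or every maximizer up to this ambiguity, has all edges of length $1$), this strictly increases $E^f_n$ unless no such move is available, i.e.\ unless the polygon is already "locked", which for a convex polygon forces all edges to length $1$ or forces the degenerate double-arc configuration $\Gamma_n^-$ where opposite vertices are already at maximal separation.

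\medskip

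\noindent\textbf{Main obstacle.} The genuinely tricky step is \textbf{part (i)}, the reduction to a plane, together with making the hinge/fold moves in (i) and (iv) \emph{legal}, i.e.\ respecting the $n$ constraints $|A_jA_{j+1}|\le 1$ simultaneously while strictly gaining energy. A fold across a hyperplane changes the lengths of exactly those cyclic edges that cross the hyperplane, and one must position the hyperplane (e.g.\ through two adjacent vertices, or argue by a limiting/perturbation argument that one may assume strict inequalities $|A_jA_{j+1}|<1$ on the edges one needs to move) so that no constraint is violated. Getting this case analysis clean — in particular handling configurations where many edges are already taut at length $1$, and handling the degenerate cases where $C$ is a segment — is where the "more geometric flavor and somewhat trickier" difficulty the authors mention resides. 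I would structure the write-up so that each of (i)--(iv) isolates a single such move and verifies the constraint bookkeeping for that move only.
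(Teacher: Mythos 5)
Your overall strategy --- exhibit a length-constraint-preserving move that increases some pairwise distances and decreases none --- is the paper's strategy, and your part (i) is essentially on track: the paper reflects one of the two sub-arcs of $G_n$ determined by a pair of vertices lying in a supporting plane $\Pi$ across $\Pi$ itself. Because the reflection fixes the two cut vertices and is an isometry on the moved arc, \emph{every} constraint $|A_jA_{j+1}|\le 1$ is preserved automatically; the only care needed is to choose the two cut vertices \emph{non}-adjacent in the cyclic order of $G_n$ (the opposite of your guess) so that each arc contains a vertex off $\Pi$ and at least one distance strictly increases, tilting $\Pi$ slightly about a line through two hull-adjacent but $G_n$-non-adjacent vertices when all off-plane vertices lie in only one arc.

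Parts (ii)--(iv), however, contain genuine gaps. For (ii): a strictly interior point of the convex hull has other vertices on all sides, so moving the single vertex $A_i$ in \emph{any} direction strictly decreases its distance to some $A_p$; there is no supporting line through an interior point, so your parenthetical justification fails. The paper instead repeats the reflection argument of (i), reflecting a whole sub-arc of $G_n$ across a line containing an edge of $\partial C$. For (iii): your reduction to $k=1$ misreads the statement. $A_i$ and $A_{i+k}$ are consecutive \emph{on the edge $L$}, not in $G_n$; the cyclically intermediate vertices $A_{i+1},\dots,A_{i+k-1}$ need not lie on the segment between them (the cyclic order on $\partial C$ generally differs from that on $G_n$). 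The paper first proves, by yet another reflection, that one of the two cyclic arcs of $G_n$ joining $A_i$ to $A_{i+k}$ lies entirely in $L$, and then argues that this arc, having no vertex strictly between $A_i$ and $A_{i+k}$ on $L$, must jump over that segment in a single step $A_{i+j-1}A_{i+j}$; since that step has length $\le 1$ and contains the segment $A_iA_{i+k}$, the conclusion follows. For (iv): your hinge is not a well-defined deformation --- cutting only at the slack edge and at its endpoint $A_{i+1}$ leaves one rigid arc, and the other endpoint of the slack edge is rigidly attached to that arc through its taut neighbouring edge, so nothing can move. The paper cuts at the slack edge \emph{and} at the vertex farthest from one of its endpoints; that choice forces the interior angle there to be less than $\pi$, and opening that angle keeps both pieces rigid, keeps the polygon convex, and increases every cross-piece distance by the law of cosines. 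Finally, note that all of these arguments require $f$ to be \emph{strictly} increasing, which is how the paper uses the hypothesis.
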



In the conclusion of (ii), the cyclic order of vertices 
in $\partial C$ usually is not the same as that in $G_n$, see Figure 4.
Also may be $A_i=A_j$ on $\partial C$, and $C$ can be degenerated.

\begin{figure}[htbp]
\begin{center}
\includegraphics[width=240pt, height=100pt]{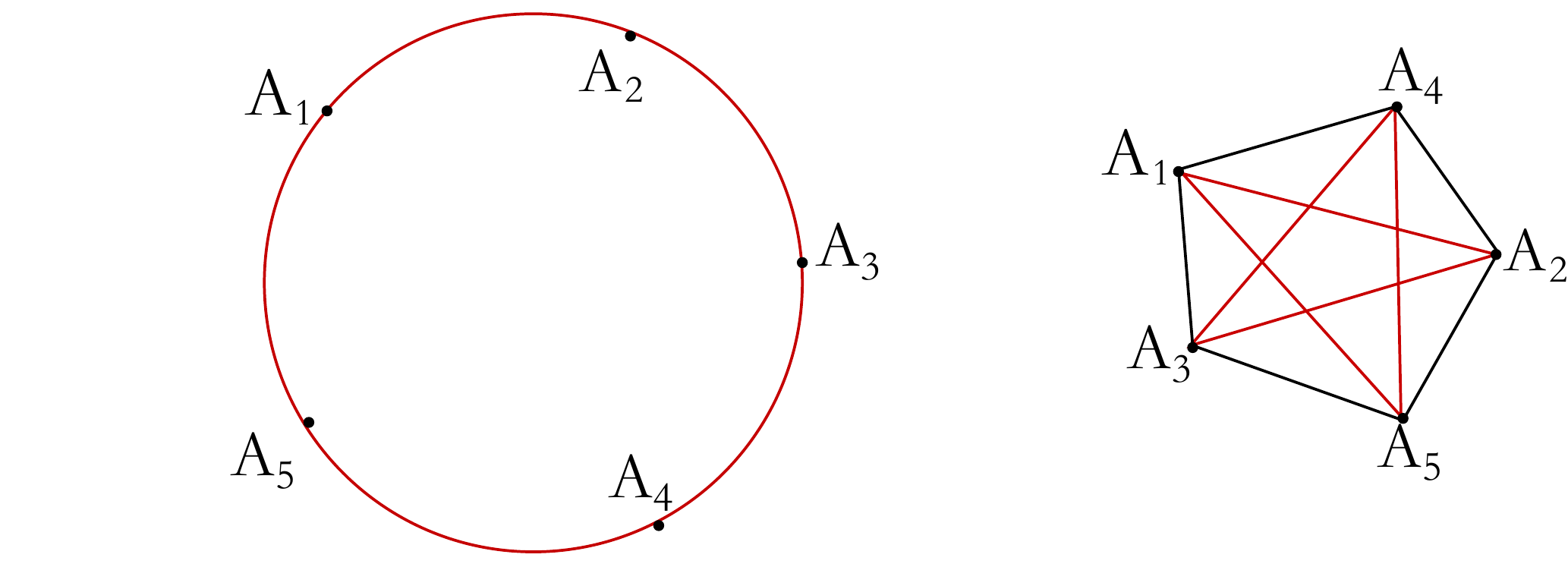}
\end{center}
\centerline{Figure 4}
\end{figure}

Then the statement (iii) makes sense under the following 

Conventions(*):  When  $C$ is degenerated,  then $\partial C$
consists of  two coincided straight arcs $C_1$ and $C_2$, and $\partial C$ 
 travels first along  $C_1$ then along $C_2$. 
Pick any vertex  $A_i$ at one end and another vertex $A_{i+k}$ at  the other end. 
We always assume that all vertices from $A_i$ to $A_{i+k}$ in $G_n$ stay in $C_1$ and remaining  vertices (from $A_{i+k+1}$ to $A_{i-1}$ in $G_n$) stay in $C_2$. 

\begin{proof}
(i) Let $\bar C$ be the convex hull of those vertices of $G_n$ (the edges of $G_n$ usually are not in $\bar C$). 
If  those vertices are not contained in any plane,
then $\bar C$ is a 3-dimensional polyhedron, and we pick a face of $\bar C$ and denote the plane containing this face by $\Pi$.

Denoted the vertices in $\Pi$ by $P_1$, $P_2$, ... , $P_k$ according to their orders in the curve $G_n$.
Note all remaining vertices are in one side of $\Pi$.

\begin{figure}[htbp]
\begin{center}
\includegraphics[width=220pt, height=140pt]{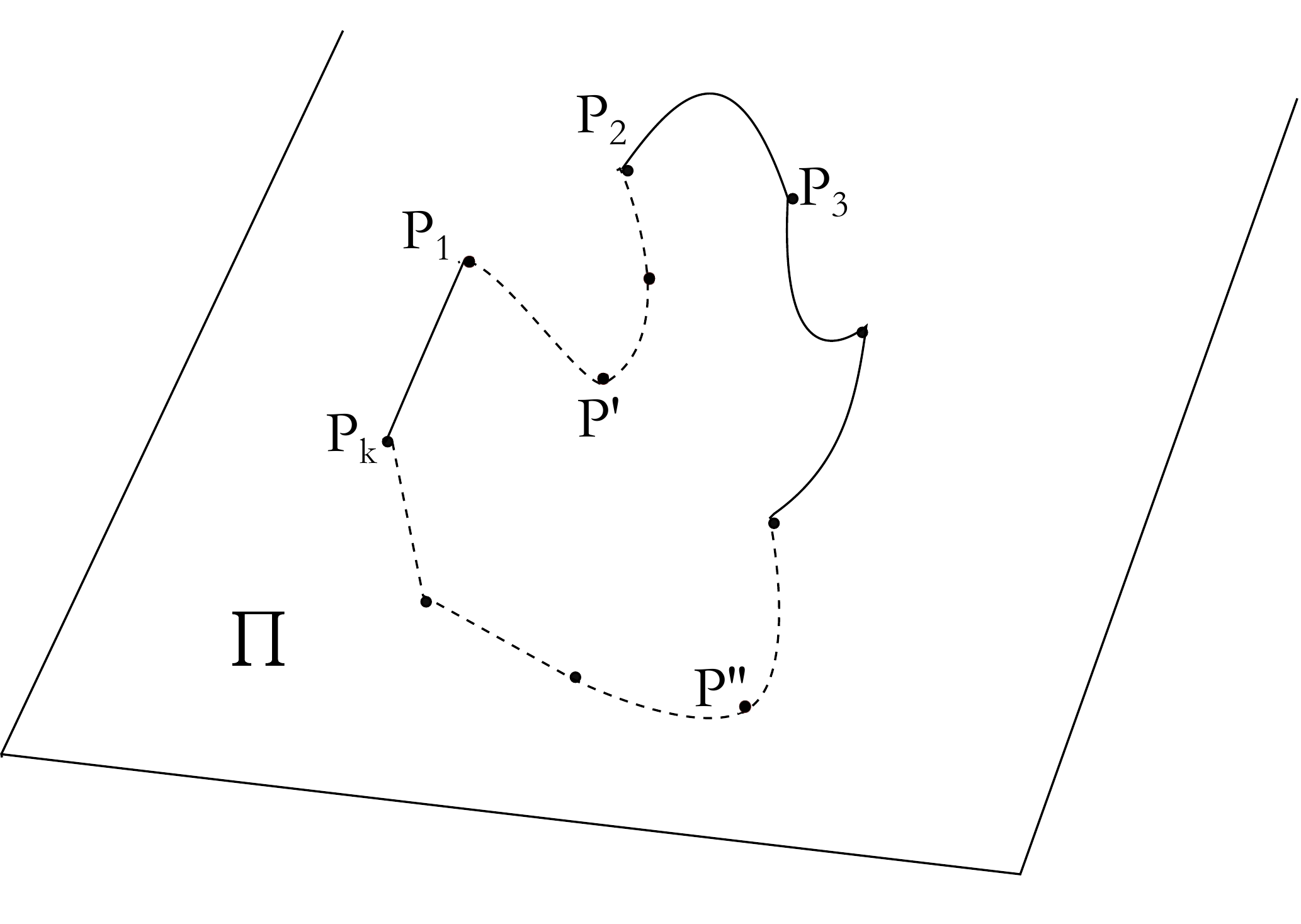}
\end{center}
\centerline{Figure 5}
\end{figure}

If $P_1$, $P_2$,..., $P_k$ are not consecutive in $G_n$, we may assume that $P_1$, $P_2$ are not consecutive in $G_n$. Then 
 $P_1$ and $P_2$ divide $G_n$ into two parts $G'$ and $G''$, each part contains some vertices not in $\Pi$, see Figure 5. Now reflecting
 $G'$ about $\Pi$ we get a new distrubution of $G^*_n\in \mathcal{G}_n$. To compare with the old distribution, the distance  $|P' P''|$ increases
 for each vertex $P'$ of $G'$ and $P''$ of $G''$ who are not in $\Pi$; and the distance of any remaining two vertices are not changed.
 So for the new distribution $E^f_n(G^*_n)$ is larger.
   
 Suppose now  $P_1$, $P_2$, ..., $P_k$ are consecutive in $G$.
 Let $C$ be the convex hull of $P_1$, $P_2$,..., $P_k$ in $\Pi$. Then $\partial C$, the boundary of $ C$, is a non-degenerated convex polygon in $\Pi$.  There are  two
 vertices, say $P_i$ and $P_j$,   consecutive in $\partial C$ but not consecutive in $G_n$ 
 (otherwise all vertices of $G_n$ are already in $\Pi$). 
Then we can rotate $\Pi$
 along the line $L$ passing $P_{i}$ and $P_j$  a very small angle so that except $P_{i}$ and $P_j$, 
 all vertices of $G_n$ are below $\Pi$ (note $G_n$ is invariant when we rotate $\Pi$), see Figure 6. $P_i$ and $P_j$ divide $G_n$ into two parts $G'$ and $G''$, each part contains some points not in $\Pi$. Now we can repeat the same argument in the last paragraph to show $E^f_n(G_n)$ can not be the maximum.
We have proved that all vertices of $G_n$ are in the same plane when $E^f_n(G_n)$ reaches the maximum.

 \begin{figure}[htbp]
\begin{center}
\includegraphics[width=220pt, height=140pt]{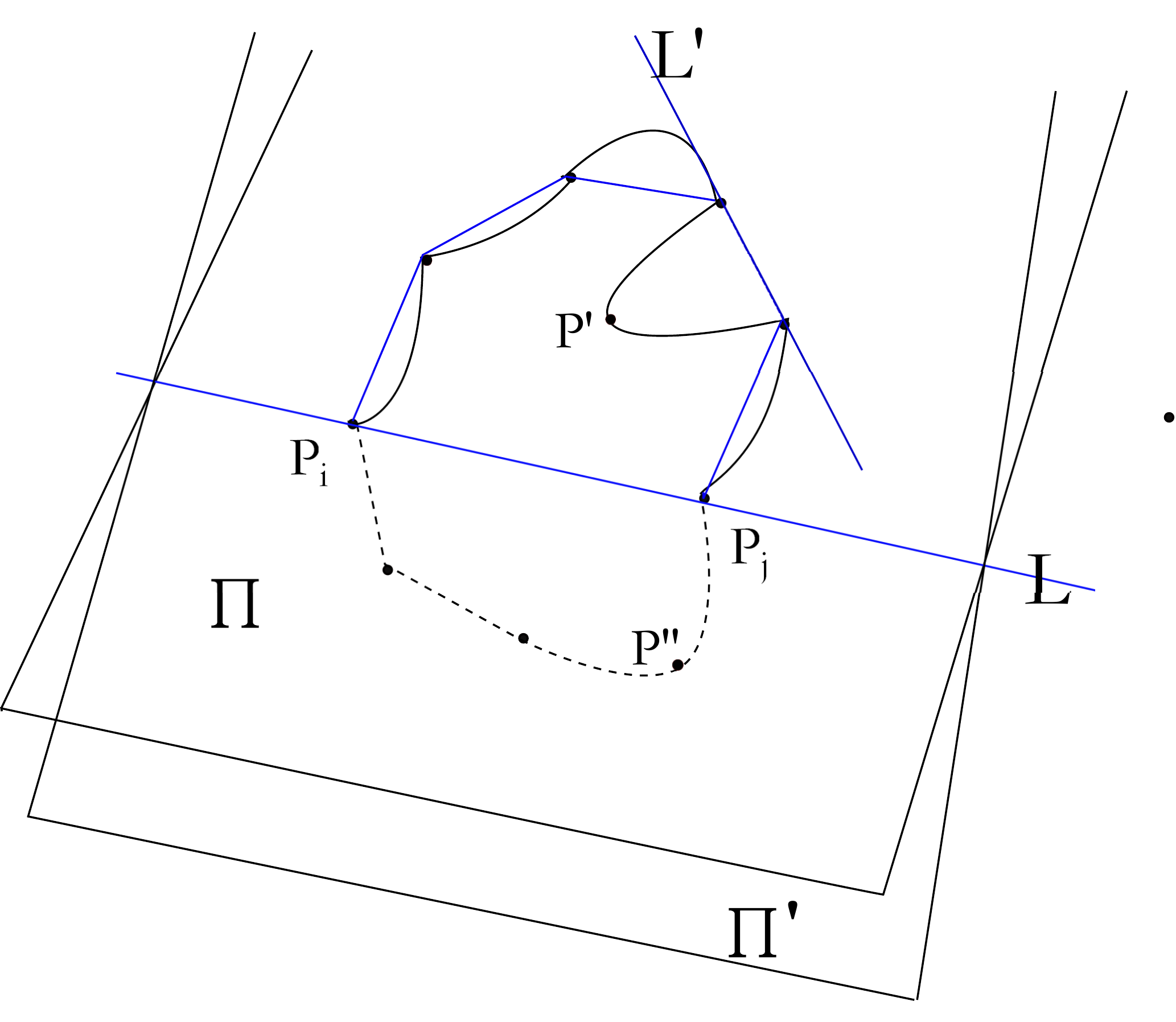}
\end{center}
\centerline{Figure 6}
\end{figure}

(ii) By (i), we assume now all vertices of $G_n$ are in the plane $\Pi$.
Suppose some vertex $P'$ of $G_n$ is in the interior of $C$ (still refer to Figure 6). 
Then again some line $L'$ in $\Pi$ (see Figure 6) contains an edge of $ C$ which
  divides $G_n$ into two parts $G'$ and $G''$, each part contains some points not in $L'$.
  Since the positions of edges of $G_n$ do not affect $E^f_n(G_n)$, for convenience, we may assume that $G_n$ is in $\Pi$. 
  Reflecting $G'$ about $L'$,
we can repeat the same argument as in (i) to show $E^f_n(G_n)$ can not be the maximum.   
We have proved that all vertices of $G_n$ are in $\partial C$. 

Below we will still use $A_1,..., A_n$ to replace  $P_1,...., P_k$.

(iii) Suppose $L$ is an edge of the polygon $\partial  C$. 

\begin{figure}[htbp]
\begin{center}
\includegraphics[width=300pt, height=160pt]{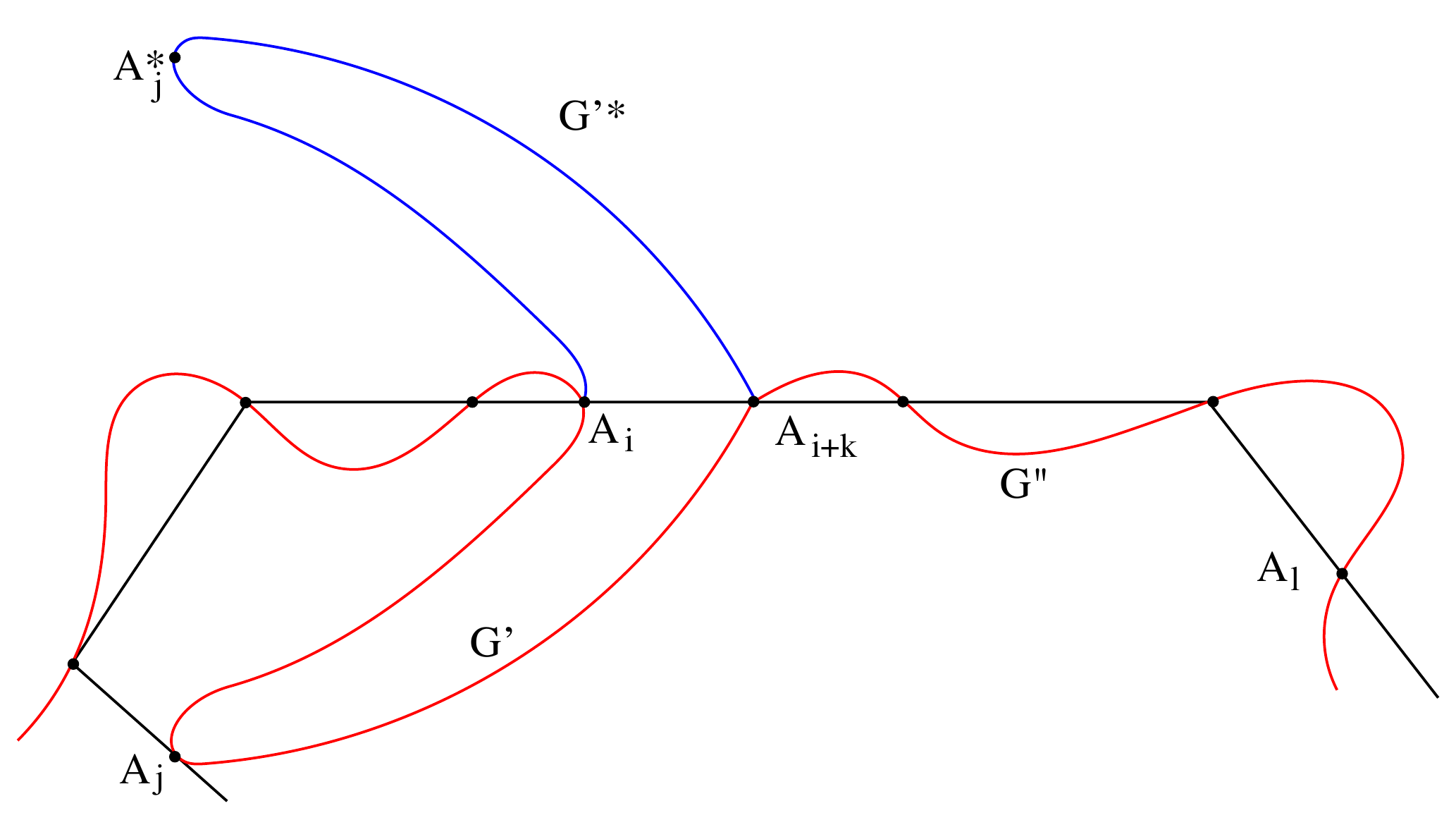}
\end{center}
\centerline{Figure 7}
\end{figure}

Claim (a):  
For any two vertices $A_i$ and $A_{i+k}$  in $L$,  either $\{A_{i+1}, ... , A_{i+k-1}\}$ 
must be in $L$, or $\{A_{i+k+1},...., A_{i-1}\}$ must be in $L$, where $\{A_{i+1},..., A_{i+k-1}\}$ denotes  the set of all vertices from  $A_{i+1}$ to $A_{i+k-1}$  in $G_n$,
and $\{A_{i+k+1},..., A_{i-1}\}$ has similar meaning.

Proof of Claim (a): If $C$ is degenerated, this follows from the  convention (*). 
Now  suppose  $C$ is non-degenerated and   $\partial C$ is a convex polygon as in Figure 7.
If Claim (a) is not true,  then we have some $A_j\in \{A_{i+1}, ..., A_{i+k-1}\}$ and $A_l\in  \{A_{i+k+1},...., A_{i-1}\}$, both $A_j$ and $A_l$ are not in $L$.
Then  $A_i$ and $A_{i+k}$ divide $G_n$ into two parts $G'$ and $G''$ with $A_j\in G'$ and $A_l\in G''$. Both of $A_j$ and $A_l$ must be in the same side of $L$.  
Now reflecting $G'$ about the line containing $L$ to get $G^{'*}$, and let $A_j^*\in G^{'*}$ be the image of $A_j$,
we have a new distribution $G^*_n=G^{'*}\cup G''\in \mathcal{G}_n$.
Clearly $|A_j^*A_l|>|A_jA_l|$ and  the distance of any remaining two vertices are not decreased.
 So for the new distribution $E^f_n(G^*_n)$ is larger, a contradiction

We are going to prove (iii): Note each vertex of the convex polygon $\partial C$ must be  a vertex of $G_n$.    So the vertices $A_i$ and $A_{i+k}$ satisfying the assumption of (iii) must be lying on an edge  $L$ of $\partial C$.
We may suppose $L$ is in horizontal position and $A_i$ is on the left of $A_{i+k}$, see Figure 7 (also Figure 7+).
By Claim (a), either $\{A_{i+1}, ..., A_{i+k-1}\}$, or $\{A_{i+k+1},...., A_{i-1}\}$ must be in $L$. 
We may assume that $\{A_{i+1}, ..., A_{i+k-1}\}$ are in $L$.

Let $j$ be the minimal integer such that $A_{i+j}$ is not on the left side of 
$A_{i+k}$, $j=1,..., k$. Then $A_{i+j-1}$ must be on left side of $A_{i+k}$.  Since there is no vertex between $A_i$ and $A_{i+k}$, $A_{i+j-1}$ is not on the right side of $A_i$. This implies that the interval $A_{i+j-1}A_{i+j}$
contains the interval $A_iA_{i+k}$, see Figure 7+. Since  $|A_{i+j-1}A_{i+j}|\le 1$, we have $|A_i A_{i+k}|\le 1$. (In the above argument, it is possible that
$A_{i+j}=A_{i+k}$ and $A_{i+j-1}=A_{i}$).

\begin{figure}[htbp]
\begin{center}
\includegraphics[width=260pt, height=50pt]{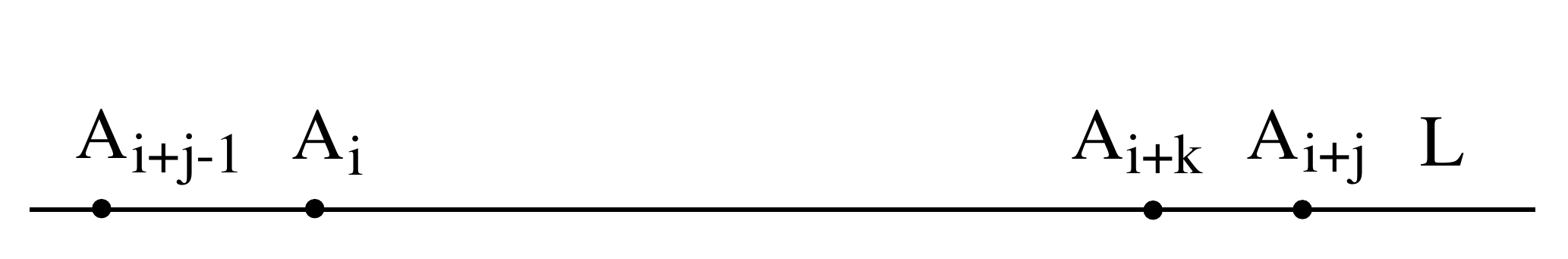}
\end{center}
\centerline{Figure 7+}
\end{figure}

(iv) Suppose  the vertices of $G_n$ appear  in $\partial C$ consecutively as  
$Q_1, Q_2, ..., Q_l$ with multiplicity $q_1, q_2, ..., q_l$, $\sum_{i=1}^l q_i=n$ (recall in the degenerated case, travel first in $C_1$
and then in $C_2$).
By (iii), $|Q_i Q_{i+1}|\le 1$.
Then there is  $ \Gamma_n\in \mathcal{G}_n$ which sends the first $q_1$ vertices  $A_1$,..., $A_{q_1}$ to $Q_1$, the next $q_2$ 
vertices $A_{q_1+1},..., A_{q_1+q_2}$ to $Q_2$,..., and the last $q_l$ vertices to $Q_l$.  
Now the vertices of in $\partial C$ are in the cyclic order $A_1,$ $A_2$,..., $A_n$ (when $A_i=A_{i+1}$ we read $A_i$ first, then $A_{i+1}$).  Clearly $E^f_n(G_n)=E^f_n(\Gamma_n)$.

\begin{figure}[htbp]
\begin{center}
\includegraphics[width=260pt, height=130pt]{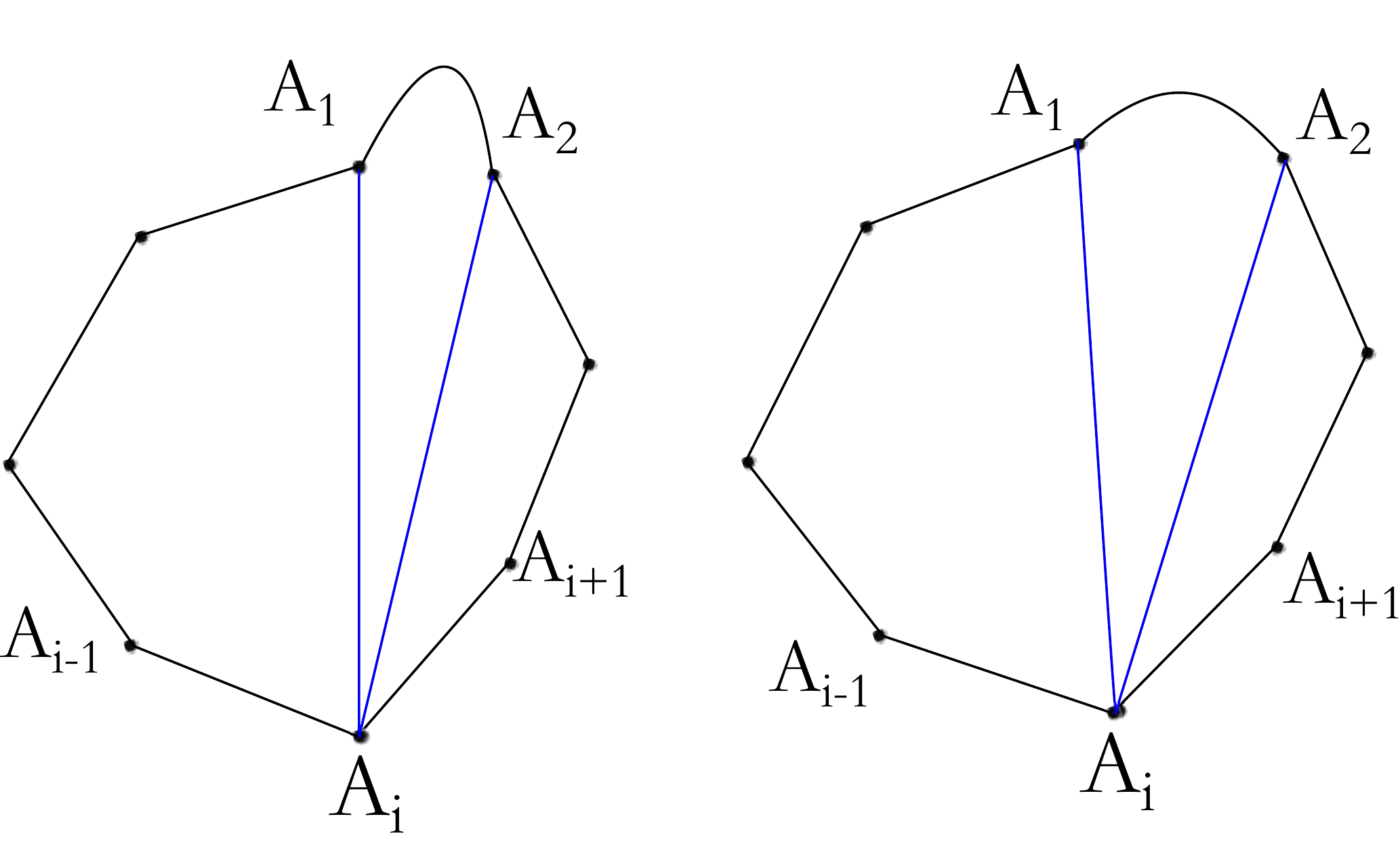}
\end{center}
\centerline{Figure 8}
\end{figure}

Suppose $G_n$ is not a convex $n$-gon of edge length one, then the distance of two consecutive vertices in $\Gamma_n$, say $A_1$ and $A_2$,  is less than 1, that is the unique edge $e$ of $\Gamma_n$ connecting
$A_1$ and $A_2$ is not straight. 
  
Let  $A_i$ be the vertex such that $|A_i A_1|$ is maximum. Then the angle $angle A_{i-1}A_iA_{i+1}$  must be less than $\pi$ (otherwise contradicts that $|A_i A_1|$ is maximum). Now $e$ and $A_i$ divide $\Gamma_n $
   into two parts $G'$ and $G''$, $G'$ contains $A_1$ and $G''$ contains $A_2$.
  Let $G^1$ be the union of $G'$ and the segment  $A_1A_i$ and  $G^2$ be the union of $G''$ and the segment  $A_2A_i$.
  Now keep both $G^1$ and $G^2$ rigid. Then rotate slightly $G^2$
around $A_i$ to increase the angle $angle A_{i-1}A_iA_{i+1}$ slightly but still less than $\pi$. 
We can do this since the unique edge $e$ connecting $G^1$ and $G^2$ is not straight.  Since each $G^1$ and $G^2$ are rigid, and 
the angle $angle A_{i-1}A_iA_{i+1}$ is increasing but still less than $\pi$,
it is easy to see the distances for points in $G^1$ are not changed, the distances for points in $G^2$ are not changed, but  for each $A_k$ in $G^1$, $A_l$ in $G^2$, $k,l \ne i$, the distance
$|A_k A_l|$ is increasing by using cosine theorem. So $E^f_n(\Gamma_n)=E^f_n(G_n)$ can not be the maximum, which contradicts
our assumeption. We have proved (iv), that is $G_n$ is a convex $n$-gon of edge length 1. \end{proof}

\section{Some basic facts, the classification for $n=4$.}

\subsection{Some primary facts.}  

From now on, for each $\Gamma_n\subset \R^2$, we often consider its vertices $A_1,..., A_n$ as vectors in $\R^2$ and 
to denote  the  vector $A_k-A_i$ by $A_iA_k$. We can talk the addition and inner product of those vectors.

Two classical inequalities below can be found in \cite{HLP}.

\begin{lemma}\label{convex}
(1) (Jensen inequality) Suppose $f$ is a concave function ($f''<0$) on $[a,b]$, $\theta_i\in [a,b ]$.
Then $$\frac{\sum _{i=1}^{n}f(\theta_i)}n\le  f(\frac{\sum_{i=1}^{n}\theta_i}n),$$ and the equality holds if and only if $\theta_1=\theta_2=...=\theta_n$.

(2) (Karamata inequality) 
Suppose $f$ is a convex function ($f''>0$) on $[a,b]$ and there are $n$ variables $x_1,x_2,...,x_n\in [a,b]$ with a fixed sum.
Then the value $\sum _{i=1}^{n}f(x_i)$ reaches the maximum if and only if at least $n-1$ variables are at  endpoints.
\end{lemma}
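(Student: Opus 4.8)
\textbf{Proof proposal for Lemma~\ref{convex}.}

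The plan is to treat the two inequalities separately, since they are classical and well documented in \cite{HLP}; the role here is to isolate the precise statements (including the equality cases) that will be invoked later, so the proof I would write is essentially a pointer with a short self-contained argument for each part. For part (1), Jensen's inequality, I would argue by induction on $n$: the base case $n=2$ is the definition of concavity, $f\bigl(\tfrac{\theta_1+\theta_2}{2}\bigr)\ge\tfrac{f(\theta_1)+f(\theta_2)}{2}$, with equality iff $\theta_1=\theta_2$ because $f''<0$ makes $f$ strictly concave. For the inductive step I would write the mean of $\theta_1,\dots,\theta_n$ as a convex combination of $\theta_n$ and the mean of $\theta_1,\dots,\theta_{n-1}$, apply the two-point inequality once and the induction hypothesis once, and track when both applications are equalities; that forces all $\theta_i$ equal. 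Alternatively, and perhaps more cleanly, I would use the supporting-line characterization: at the point $\bar\theta=\tfrac1n\sum\theta_i$ pick a line $\ell(t)=f(\bar\theta)+m(t-\bar\theta)$ with $f(t)\le\ell(t)$ for all $t\in[a,b]$, sum the inequalities $f(\theta_i)\le\ell(\theta_i)$, and observe $\tfrac1n\sum\ell(\theta_i)=f(\bar\theta)$; strict concavity gives $f(\theta_i)<\ell(\theta_i)$ unless $\theta_i=\bar\theta$, which yields the equality case.

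For part (2), Karamata's inequality in the form stated, the key observation is that we are maximizing the convex function $t\mapsto\sum f(x_i)$ over the polytope
$$P=\Bigl\{(x_1,\dots,x_n)\in[a,b]^n:\ \textstyle\sum_{i=1}^n x_i = S\Bigr\},$$
which is compact, so a maximum exists, and a convex function on a compact convex polytope attains its maximum at an extreme point. The remaining step is to identify the extreme points of $P$: a point of $P$ is extreme iff at most one coordinate lies strictly between $a$ and $b$, i.e.\ at least $n-1$ coordinates sit at an endpoint. This is a short linear-algebra argument --- if two coordinates $x_j,x_k$ are both interior, one can perturb $x_j\mapsto x_j+\varepsilon$, $x_k\mapsto x_k-\varepsilon$ for small $\varepsilon$ of either sign, staying in $P$, so the point is not extreme; conversely if $n-1$ coordinates are pinned at endpoints the last is determined by the sum constraint, giving a vertex. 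Combining, the maximum of $\sum f(x_i)$ is attained at such a configuration; strict convexity ($f''>0$) on each open subinterval shows that any interior freedom in two or more coordinates would strictly increase the sum by pushing them apart, which gives the ``only if'' direction of the equality statement.

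I do not expect a genuine obstacle here, as both statements are standard; the only point requiring care is the equality/characterization clause rather than the inequality itself. For Jensen this means being careful that the induction tracks \emph{strict} inequality correctly (both the two-point step and the hypothesis must be equalities simultaneously), and for Karamata it means correctly pinning down the extreme points of the slice polytope $P$ and checking that strict convexity rules out any maximizer with two free interior coordinates. Since the paper's later use (decomposing $E^\alpha_n$ into $k$-step energies and applying Jensen to each, following Luko) depends precisely on these equality cases to pin down the regular $n$-gon, I would state the lemma exactly as above and give the supporting-line proof of (1) and the extreme-point proof of (2), citing \cite{HLP} for the bare inequalities.
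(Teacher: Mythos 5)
The paper does not prove this lemma at all: it simply records the two statements and cites Hardy--Littlewood--P\'olya (\cite{HLP}), so there is no ``paper proof'' to compare against. Your two arguments are correct and standard: the supporting-line proof of Jensen (tangent line at $\bar\theta$, sum, use strict concavity from $f''<0$ for the equality case) and the extreme-point argument for part (2) (a convex function on the compact polytope $P=\{x\in[a,b]^n:\sum x_i=S\}$ attains its maximum at an extreme point, and the extreme points are exactly those with at most one coordinate in $(a,b)$, with strict convexity ruling out maximizers having two interior coordinates). Both would serve as complete substitutes for the citation. One small caution, which is a defect of the statement rather than of your proof: read literally, the ``if'' direction of part (2) is false --- not every configuration with $n-1$ coordinates at endpoints is a maximizer, since different endpoint patterns generally give different values of $\sum f(x_i)$. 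What you actually prove (the maximum exists, is attained at such a configuration, and every maximizer is of this form) is the correct and the only usable content, and it is exactly what the paper needs in Proposition 4.3, where $n=2$ and the two extreme points $(0,1)$, $(1,0)$ are symmetric. You might also note in passing that the paper's label ``Karamata inequality'' is a misnomer --- Karamata's inequality concerns majorization --- but the statement as used is just the extreme-point principle you proved.
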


\begin{lemma}\label{concon}
$f_\alpha(x)$ is an increasing function; furthermore $f_a(x)$ is concave when $\alpha<1$ and is convex when $a>1$.
\end{lemma}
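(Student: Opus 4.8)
The statement to prove is Lemma \ref{concon}: $f_\alpha(x)$ is increasing on $(0,\infty)$, and is concave for $\alpha < 1$ and convex for $\alpha > 1$. Let me sketch a proof.

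The plan is to simply compute first and second derivatives of $f_\alpha$ on $(0,\infty)$ in the three regimes $\alpha > 0$, $\alpha = 0$, $\alpha < 0$, as given by the piecewise definition (1.2), and read off monotonicity and convexity from the signs.

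First I would handle monotonicity. For $\alpha > 0$, $f_\alpha(x) = x^\alpha$ has $f_\alpha'(x) = \alpha x^{\alpha-1} > 0$. For $\alpha = 0$, $f_0(x) = \ln x$ has $f_0'(x) = 1/x > 0$. For $\alpha < 0$, $f_\alpha(x) = -x^\alpha$ has $f_\alpha'(x) = -\alpha x^{\alpha-1} > 0$ since $-\alpha > 0$. So $f_\alpha$ is increasing in all cases.

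Next, convexity/concavity via the second derivative. For $\alpha > 0$, $\alpha \ne 1$: $f_\alpha''(x) = \alpha(\alpha-1) x^{\alpha-2}$, which is negative when $0 < \alpha < 1$ and positive when $\alpha > 1$. For $\alpha = 0$: $f_0''(x) = -1/x^2 < 0$, so concave (consistent with $\alpha < 1$). For $\alpha < 0$: $f_\alpha''(x) = -\alpha(\alpha-1)x^{\alpha-2}$; since $\alpha < 0$ we have $-\alpha > 0$ and $\alpha - 1 < 0$, so $-\alpha(\alpha-1) < 0$, hence concave (consistent with $\alpha < 1$). Assembling these, $f_\alpha$ is concave for every $\alpha < 1$ and convex for every $\alpha > 1$.

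There is essentially no obstacle here — it is a routine calculus check — but the one point that deserves a word of care is that the defining formula (1.2) switches sign at $\alpha = 0$, so one must verify the three pieces separately rather than differentiating "$x^\alpha$" uniformly; the sign flip in the $\alpha<0$ branch is exactly what is needed to keep $f_\alpha$ increasing and concave there. Here is the writeup.

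\begin{proof}
We verify the claims on $(0,\infty)$ by differentiating the three branches of \eqref{eq:falpha-ref}, handled separately because the sign convention in the definition changes at $\alpha=0$.

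Monotonicity. If $\alpha>0$, then $f_\alpha(x)=x^\alpha$ and $f_\alpha'(x)=\alpha x^{\alpha-1}>0$. If $\alpha=0$, then $f_0(x)=\ln x$ and $f_0'(x)=1/x>0$. If $\alpha<0$, then $f_\alpha(x)=-x^\alpha$ and $f_\alpha'(x)=-\alpha x^{\alpha-1}>0$ since $-\alpha>0$. Hence $f_\alpha$ is increasing for every $\alpha\in\R$.

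Convexity and concavity. If $\alpha>0$ and $\alpha\neq 1$, then $f_\alpha''(x)=\alpha(\alpha-1)x^{\alpha-2}$, which is negative for $0<\alpha<1$ and positive for $\alpha>1$; thus $f_\alpha$ is concave in the first case and convex in the second. If $\alpha=0$, then $f_0''(x)=-1/x^2<0$, so $f_0$ is concave, consistently with $\alpha<1$. If $\alpha<0$, then $f_\alpha''(x)=-\alpha(\alpha-1)x^{\alpha-2}$; here $-\alpha>0$ and $\alpha-1<0$, so $-\alpha(\alpha-1)<0$ and $f_\alpha''(x)<0$, hence $f_\alpha$ is concave, again consistently with $\alpha<1$. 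Collecting the cases, $f_\alpha$ is concave whenever $\alpha<1$ and convex whenever $\alpha>1$.
\end{proof}
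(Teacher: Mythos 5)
Your proof is correct and follows essentially the same route as the paper, which also just computes $f_\alpha'$ and $f_\alpha''$ and reads off the signs; you merely spell out the three branches of the definition explicitly where the paper calls it a ``direct calculation.'' No issues.
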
 

\begin{proof}
A direct calculation show $f_\alpha'$ is always positive, hence $f_\alpha$ is an increasing function.
Moreover $f_\alpha''$ is negative when $\alpha<1$, hence $f_\alpha$ is concave when $\alpha<1$. 
$f_\alpha''$ is positive  when $\alpha>1$, hence $f_\alpha$ is convex when $\alpha>1$. \end{proof}

\subsection{Classification of when $\Gamma_4$ realizing max$E_4^\alpha$.}
\begin{proposition}\label{n=4} $E^{\alpha}_4$ reaches the maximum at the square $\Gamma_4^o$ for $\alpha<2$ and at the double straight line $\Gamma_4^-$ for $\alpha>2$,  and $E^{2}_4(\Gamma_4)$ is a constant for all $\Gamma_4$.
\end{proposition}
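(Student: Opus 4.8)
The plan is to invoke Theorem~\ref{convex1} to restrict attention to the family $\{\Gamma_4\}$ of convex quadrilaterals with all four edges of length $1$, then rewrite $E^\alpha_4(\Gamma_4)$ as a one–variable quantity governed by the two diagonals, and finally apply the Jensen and Karamata inequalities of Lemma~\ref{convex}.

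First I would record the elementary geometric fact that a convex quadrilateral $A_1A_2A_3A_4$ with $|A_1A_2|=|A_2A_3|=|A_3A_4|=|A_4A_1|=1$ is a rhombus: the triangles $A_1A_2A_3$ and $A_1A_4A_3$ are isosceles over the common base $A_1A_3$, so $A_2$ and $A_4$ both lie on the perpendicular bisector of $A_1A_3$; hence the diagonals are perpendicular and bisect each other. Writing $d_1=|A_1A_3|$ and $d_2=|A_2A_4|$, the right triangle occupying a quarter of the rhombus gives $d_1^2+d_2^2=4$, and as $d_1$ ranges over $[0,2]$ one obtains every shape of $\Gamma_4$ — the endpoints $d_1\in\{0,2\}$ giving the degenerate arc $\Gamma_4^-$ and $d_1=d_2=\sqrt2$ giving the square $\Gamma_4^o$ (a rhombus with equal diagonals is a square). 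Since the four unit edges contribute the constant $4f_\alpha(1)$,
$$E^\alpha_4(\Gamma_4)=4f_\alpha(1)+f_\alpha(d_1)+f_\alpha(d_2),\qquad d_1^2+d_2^2=4.$$
Substituting $\theta_i=d_i^2\in[0,4]$ (so $\theta_1+\theta_2=4$) and $\psi(\theta):=f_\alpha(\sqrt\theta)$, the task becomes: maximize $\psi(\theta_1)+\psi(\theta_2)$ over this segment. A direct differentiation, in the spirit of Lemma~\ref{concon}, shows that $\psi$ is strictly concave when $\alpha<2$ (covering $\psi(\theta)=\theta^{\alpha/2}$ with $0<\alpha/2<1$, $\psi(\theta)=\tfrac12\ln\theta$ when $\alpha=0$, and $\psi(\theta)=-\theta^{\alpha/2}$ when $\alpha<0$), is the linear function $\psi(\theta)=\theta$ when $\alpha=2$, and is strictly convex when $\alpha>2$.

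The three cases then follow. If $\alpha=2$, then $f_2(d_1)+f_2(d_2)=\theta_1+\theta_2=4$, so $E^2_4(\Gamma_4)=4f_2(1)+4=8$ is constant. If $\alpha<2$, Jensen's inequality (Lemma~\ref{convex}(1)) applied to $\theta_1,\theta_2$ of fixed sum gives $\psi(\theta_1)+\psi(\theta_2)\le 2\psi(2)$, with equality iff $\theta_1=\theta_2=2$, i.e.\ $d_1=d_2=\sqrt2$; hence $\Gamma_4^o$ is the unique maximizer. If $\alpha>2$, Karamata's inequality (Lemma~\ref{convex}(2)) applied to the two variables $\theta_1,\theta_2\in[0,4]$ of fixed sum $4$ shows that $\psi(\theta_1)+\psi(\theta_2)$ is maximal exactly when (at least) one of $\theta_1,\theta_2$ is an endpoint, which together with $\theta_1+\theta_2=4$ forces $\{\theta_1,\theta_2\}=\{0,4\}$, i.e.\ $\{d_1,d_2\}=\{0,2\}$ — precisely $\Gamma_4^-$; note that $\alpha>0$ here, so $f_\alpha(0)=0$ and no divergence occurs.

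The only step that is not pure bookkeeping is the geometric reduction in the second paragraph: one must correctly identify the possible shapes of an equilateral convex quadrilateral (including the degenerate arc) and extract the identity $d_1^2+d_2^2=4$. Once the energy is in the form $f_\alpha(d_1)+f_\alpha(d_2)$ with $d_1^2+d_2^2$ fixed, the rest is the convexity/concavity analysis of $\psi$ together with the two classical inequalities, the only care being to keep track of strictness for the ``if and only if'' conclusions.
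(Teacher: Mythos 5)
Your proposal is correct and follows essentially the same route as the paper: reduce to the rhombus, observe that the four unit edges contribute a constant, and maximize $f_\alpha(d_1)+f_\alpha(d_2)$ subject to $d_1^2+d_2^2=4$ via Jensen for $\alpha<2$ and Karamata for $\alpha>2$ (the paper's substitution $t=\cos^2(\phi/2)$ is just your $\theta_1/4$). The only cosmetic difference is that you make the rhombus identification and the identity $d_1^2+d_2^2=4$ explicit, which the paper leaves implicit in its angle parametrization.
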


\begin{figure}[htbp]
\begin{center}
\includegraphics[width=300pt, height=100pt]{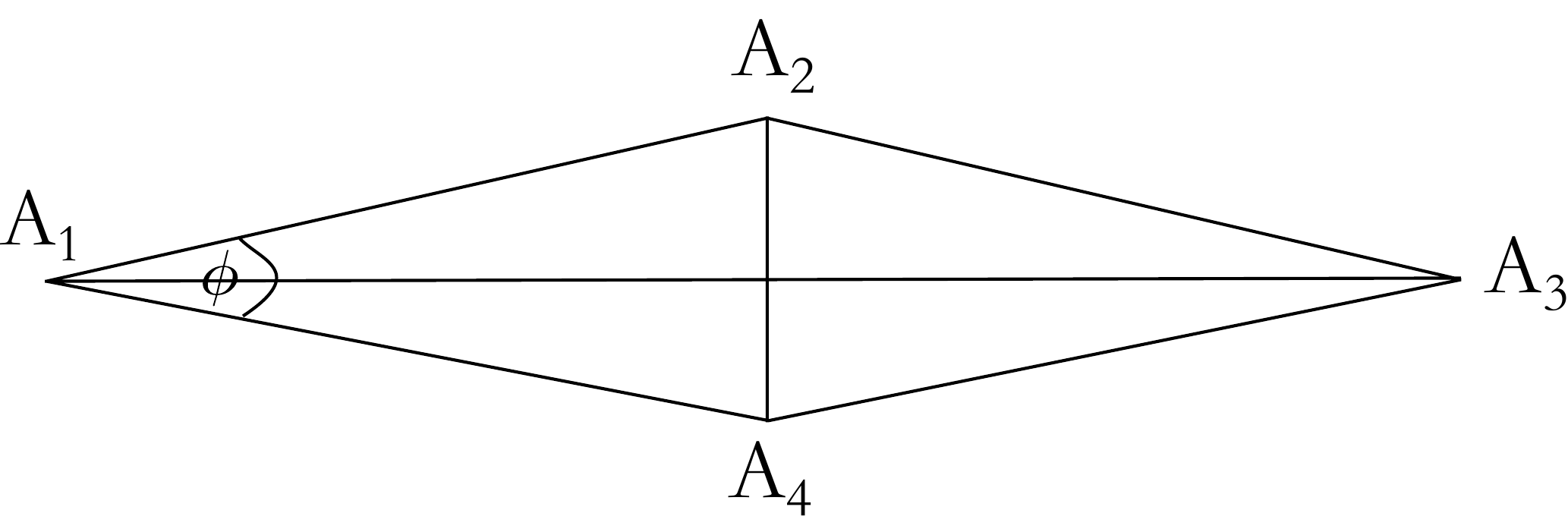}
\end{center}
\centerline{Figure 9}
\end{figure}

\begin{proof} See Figure 9 for $\Gamma_4$. Note in
$$E^\alpha _4(\Gamma_4)=f_\alpha(|A_1A_2|)+f_\alpha(|A_2A_3|)+f_\alpha(|A_3A_4|)+f_\alpha(|A_4A_1|)+f_\alpha(|A_1A_3|)+f_\alpha(|A_2A_4|)$$
the sum of first four terms  is a constant  for any given $\alpha$.
So we need only to classify when $\Gamma_4$ realizing maximum of $f_\alpha(|A_1A_3|)+f_\alpha(|A_2A_4|)$.

Let the inner angle at $A_1$ be $\phi$. Then $\Gamma_4$ is determined by $\phi$. Denote  $f_\alpha(|A_1A_3|)+f_\alpha(|A_2A_4|)$ by $E^\alpha(\phi)$.
We have 
$$E^\alpha(\phi)=f_\alpha(|A_1A_3|)+f_\alpha(|A_2A_4|)=f_\alpha(2\cos  \phi/2)) + f_\alpha(2\sin  \phi/2))$$
$$=f_\alpha(2(\cos^2  \phi/2))^{1/2}) + f_\alpha(2(\sin^2  \phi/2))^{1/2})=
\left\{
\begin{array}{ccc}
2^\alpha(t^{\alpha/2}+(1-t)^{\alpha/2}), \,\, &\alpha>0;\\
\ln 4 + \frac 12 (\ln t +\ln (1-t )), \,\, &\alpha=0;\\
-2^\alpha(t^{\alpha/2}+(1-t)^{\alpha/2}), \,\, &\alpha<0;
\end{array}
\right. $$
where $t=\cos^2 \phi/2$.

By Lemma \ref{concon} $f_\alpha$ is a concave function if $\alpha<1$ and a convex function if $\alpha>1$.

If $\alpha<2$, then $\alpha/2 <1$, we can apply by Jensen inequality to get that  $E^\alpha (\phi)$ reached the maximum if and only if $t=1/2$,
that is $\cos^2 \phi/2=1/2$, that is $\phi=\pi/2$ and therefore $E^\alpha (\phi)$ reaches the maximum 
if and only if $\Gamma_4=\Gamma_4^o$.  

If $\alpha> 2$, then $\alpha/2 >1$, we can apply  Karamata inequality to get $E^\alpha (\phi)$ reached the maximum if and only if $t=0$ or 1,
that is $\cos^2 \phi/2=0$ or 1, that is $\phi=0$ or $\pi$, and  therefore $E^\alpha (\phi)$ reaches the maximum 
if and only if $\Gamma_4=\Gamma_4^-$.  

When $\alpha=2$, then $\alpha/2 =1$, and $ E^\alpha _4(\Gamma_4)$ is a constant 8.\end{proof}

\subsection{$E_n^2(\Gamma_n)$ and moment of inertia.}
\begin{remark}\label{moment of inertia}
If we consider each vertex $A_i$ of $\Gamma_n$ has unit mass, and there no mass on the curve $\Gamma$.  Then $E_n^2(\Gamma_n)$ is the  
the moment of inertia of $\Gamma_n$ about its mass center, up to a constant $n$.
\end{remark}

\begin{proof}
We choose  the mass center
of $\Gamma_n$ be the origin $O$. Then by definition
$\sum_{i=1}^n A_i=0$.
Now 

$$E_n^2(\Gamma_n)=\sum_{i<j} |A_i-A_j|^2=\sum_{i<j} \left< A_i-A_j, A_i-A_j\right> $$
$$=\frac 12 \sum_{i, j} \left< A_i-A_j, A_i-A_j\right> $$
$$=\frac 12 \sum_{i, j} (\left< A_i, A_i\right>-\left< A_i,A_j\right>-\left< A_i, A_j\right>+\left< A_j, A_j\right>)$$
$$=\frac 12 \sum_{i, j} (\left< A_i, A_i\right>-2\left<A_i, A_j\right>+\left< A_j, A_j\right>)$$
$$=n(\sum_{i}  |A_i|^2- \sum_{i, j} \left< A_i, A_j\right>)$$

On the other hand 

$$\sum_{i, j} \left< A_i, A_j\right>=\left<\sum_{i=1}^n A_i, \sum_{i=1}^n A_i\right>=\left<0,0\right>=0$$

So we have  $$E_n^2(\Gamma_n)=n\sum_{i}  |A_i|^2.$$
That is to say, $E_n^2(\Gamma_n)$ is the  
the moment of inertia of $\Gamma_n$ about its mass center, up to a constant $n$.

\end{proof}

\section{When the regular $n$-gon $\Gamma_n^o$ realizes  max$E^{\alpha}_n$}

\begin{theorem}\label{main3}
For $\alpha\le 2$, $n\ge 5$, $E_n^{\alpha}(\Gamma_n)$ reaches its maximum if and only if $\Gamma_n$ is $\Gamma_n^o$, the regular $n$-gon of edge length 1.
\end{theorem}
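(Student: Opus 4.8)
The strategy is to decompose the total energy $E_n^\alpha(\Gamma_n)$ as a sum over ``chord types'' and then apply the Jensen inequality to each type separately. For a convex $n$-gon $\Gamma_n$ with vertices $A_1,\dots,A_n$ in cyclic order and all edges of length $1$, call the chord $A_iA_{i+k}$ a \emph{$k$-step chord} (indices mod $n$, $1\le k\le \lfloor n/2\rfloor$). Write $E_{n,k}^\alpha$ for the sum of $f_\alpha(|A_iA_{i+k}|)$ over all $k$-step chords, so that $E_n^\alpha(\Gamma_n)=\sum_k E_{n,k}^\alpha$, with the $k=n/2$ term counted with the appropriate multiplicity when $n$ is even. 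The $k=1$ term is the constant $n\cdot f_\alpha(1)$, so it can be ignored. The plan is to show that for each fixed $k\ge 2$, the partial energy $E_{n,k}^\alpha$ is maximized precisely at $\Gamma_n^o$, and since $\Gamma_n^o$ simultaneously maximizes every summand, it maximizes the total; uniqueness follows because equality in the $k=2$ term (say) already forces regularity.

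The key analytic input is that for $\alpha\le 2$ the function $g_\alpha(x):=f_\alpha(\sqrt{x})$ is concave on $(0,\infty)$ — indeed $g_\alpha(x)=x^{\alpha/2}$ up to sign (or $\tfrac12\ln x$ when $\alpha=0$), and $\alpha/2\le 1$, so $g_\alpha''\le 0$, with strict concavity unless $\alpha=2$. Next I would bring in the geometric identity underlying Luko's argument: write each $|A_iA_{i+k}|$ in terms of the exterior angles (turning angles) $\delta_1,\dots,\delta_n\ge 0$ of the convex polygon, which satisfy $\sum_i\delta_i=2\pi$. For a convex unit-edge polygon, the squared length $|A_iA_{i+k}|^2$ is a function of the $k-1$ consecutive angles $\delta_{i+1},\dots,\delta_{i+k-1}$ between the two endpoints; more usefully, I would use the averaging trick: apply Jensen to $g_\alpha$ to get $\sum_i g_\alpha(|A_iA_{i+k}|^2)\le n\, g_\alpha\!\big(\tfrac1n\sum_i |A_iA_{i+k}|^2\big)$, and then show that $\sum_i |A_iA_{i+k}|^2$ is \emph{itself} maximized at the regular polygon. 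This last claim is exactly the $\alpha=2$ case, which is Remark~\ref{moment of inertia} combined with Proposition~\ref{n=4}-type reasoning: $\sum_i|A_iA_{i+k}|^2$ expands, via $\langle\cdot,\cdot\rangle$, into a sum $\sum_i \langle A_iA_{i+k}, A_iA_{i+k}\rangle = \sum_i |\,e_{i+1}+\cdots+e_{i+k}\,|^2$ where $e_j$ are the unit edge vectors, and expanding the square gives $nk + 2\sum (\text{inner products of edge vectors at fixed cyclic distance})$, each such inner-product sum being $\sum_i \langle e_i, e_{i+j}\rangle = \mathrm{Re}\big(\sum_i u_i \bar u_{i+j}\big)$ for unit complex numbers $u_i$; by the triangle/Cauchy–Schwarz inequality $|\sum_i u_i\bar u_{i+j}|\le n$, attained when the $u_i$ are in arithmetic progression, i.e. the polygon is regular. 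Care is needed: a priori $\sum_i u_i\bar u_{i+j}$ could be a large \emph{negative} real number, so I must use convexity of the polygon to pin down the sign — the turning angles all have the same sign, which forces the partial sums of edge vectors to stay in a half-plane and gives the right bound.

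After assembling these pieces — (a) $g_\alpha$ concave; (b) Jensen reduces each $E_{n,k}^\alpha$ to an inequality about $\sum_i|A_iA_{i+k}|^2$; (c) that sum is maximized at $\Gamma_n^o$ by the inner-product computation — the theorem follows, and uniqueness comes from tracking equality cases: for $\alpha<2$ equality in Jensen for $k=2$ already requires all two-step chords equal, and for a convex unit-edge polygon that forces all turning angles equal, hence $\Gamma_n=\Gamma_n^o$; for $\alpha=2$ uniqueness is \emph{not} claimed from step (b) alone but follows from the equality analysis of step (c) applied to any $k\ge 2$ (note $n\ge 5$ guarantees such a $k$ exists with the chord genuinely depending on the shape, unlike $n=4$ where $E_4^2$ is constant). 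The main obstacle I anticipate is step (c): establishing that $\sum_i |A_iA_{i+k}|^2$ — equivalently the real part of $\sum_i u_i \bar u_{i+j}$ — is maximized exactly at the regular polygon among \emph{all} convex unit-edge $n$-gons, handling the sign issue and the degenerate configurations $\Gamma_n^-$ correctly, and nailing down the equality case so that uniqueness propagates back to the original theorem; this is precisely the point where invoking Luko's 1966 result (as the acknowledgement indicates) does the heavy lifting.
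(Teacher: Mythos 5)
Your overall architecture is the same as the paper's: decompose $E_n^\alpha$ into $k$-step energies, use concavity of $g_\alpha(x)=f_\alpha(\sqrt x)$ for $\alpha\le 2$ together with Jensen to reduce each $E^\alpha_{n,k}$ to the quadratic quantity $\sum_i|A_iA_{i+k}|^2$, and then invoke Luk\'o's sharp bound $\sum_i|A_iA_{i+k}|^2\le n\sin^2\frac{k\pi}{n}/\sin^2\frac{\pi}{n}$. If you treat Luk\'o's Theorems II and III as black boxes (as you suggest at the end), the $\alpha<2$ case goes through, and your uniqueness shortcut via Jensen equality at $k=2$ (all two-step chords equal $\Rightarrow$ all turning angles equal $\Rightarrow$ regular) is clean and correct.

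There are, however, two genuine gaps. First, your sketched elementary proof of step (c) does not work as written: bounding each $\bigl|\sum_i u_i\bar u_{i+j}\bigr|$ by $n$ via the triangle inequality yields only $\sum_i|A_iA_{i+k}|^2\le nk^2$, which is strictly weaker than the sharp bound $n\sin^2\frac{k\pi}{n}/\sin^2\frac{\pi}{n}$ for $k\ge2$, is never attained by a closed polygon (it would require all edge vectors parallel, contradicting $\sum_i e_i=0$), and therefore identifies no maximizer; moreover the regular $n$-gon does not maximize each $\sum_i\langle e_i,e_{i+j}\rangle$ separately (for the regular polygon this sum is $n\cos\frac{2\pi j}{n}$, possibly negative). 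The correct argument must use the closure constraint $\sum_i u_i=0$ and is essentially a discrete Wirtinger/Fourier inequality on the edge sequence --- which is exactly what Luk\'o proves, so the sign issue you flag is not the real obstruction. Second, for $\alpha=2$ the equality case of Luk\'o's quadratic bound characterizes \emph{affine images} of $\Gamma_n^o$, not $\Gamma_n^o$ itself; since $g_2$ is linear, Jensen gives no extra information, so you still need the separate fact that a convex unit-edge $n$-gon that is an affine image of $\Gamma_n^o$ must equal $\Gamma_n^o$ when $n\ge5$ (the paper proves this as a lemma: a nontrivial affine map forces the unit edge vectors of the regular $n$-gon to occupy at most four directions, impossible for $n\ge5$). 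Your remark about $n=4$ gestures at the right phenomenon but does not close this step.
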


For any $x>0$, let $[x]$ be the maximum integer not bigger than $x$.

Let $\Gamma_n$ be a convex $n$-gon with each edge of length 1 in $\R^2$ with vertices $A_1,...., A_n$. 
The following decomposition of  $E^{\alpha}_n$ is an important step in the proof of Theorem \ref{main3}.
$$E^\alpha_{n}(\Gamma_n)=\sum_{k=1}^{[n/2]} \mu_{n,k}E^\alpha_{n,k}(\Gamma_n), \qquad (5.1)$$
where  
$$E^\alpha_{n,k}(\Gamma_n)= \sum_{i=1}^n f_\alpha (|A_iA_{i+k}|) \qquad (5.2)$$
where $\mu_{n,k}=1/2$ if $n$ is even and $k=n/2$ and $=1$  for the remaining cases. 

In Figure 10, the interactions of $E^\alpha_{n,k}$ along the 
black lines for $(n,k)=(6,1), (7,1)$, along the blue lines for $(n,k)=(6,2), (7,2)$, and along the red lines for $(n,k)=(6,3), (7,3)$.

\begin{figure}[htbp]
\begin{center}
\includegraphics[width=240pt, height=120pt]{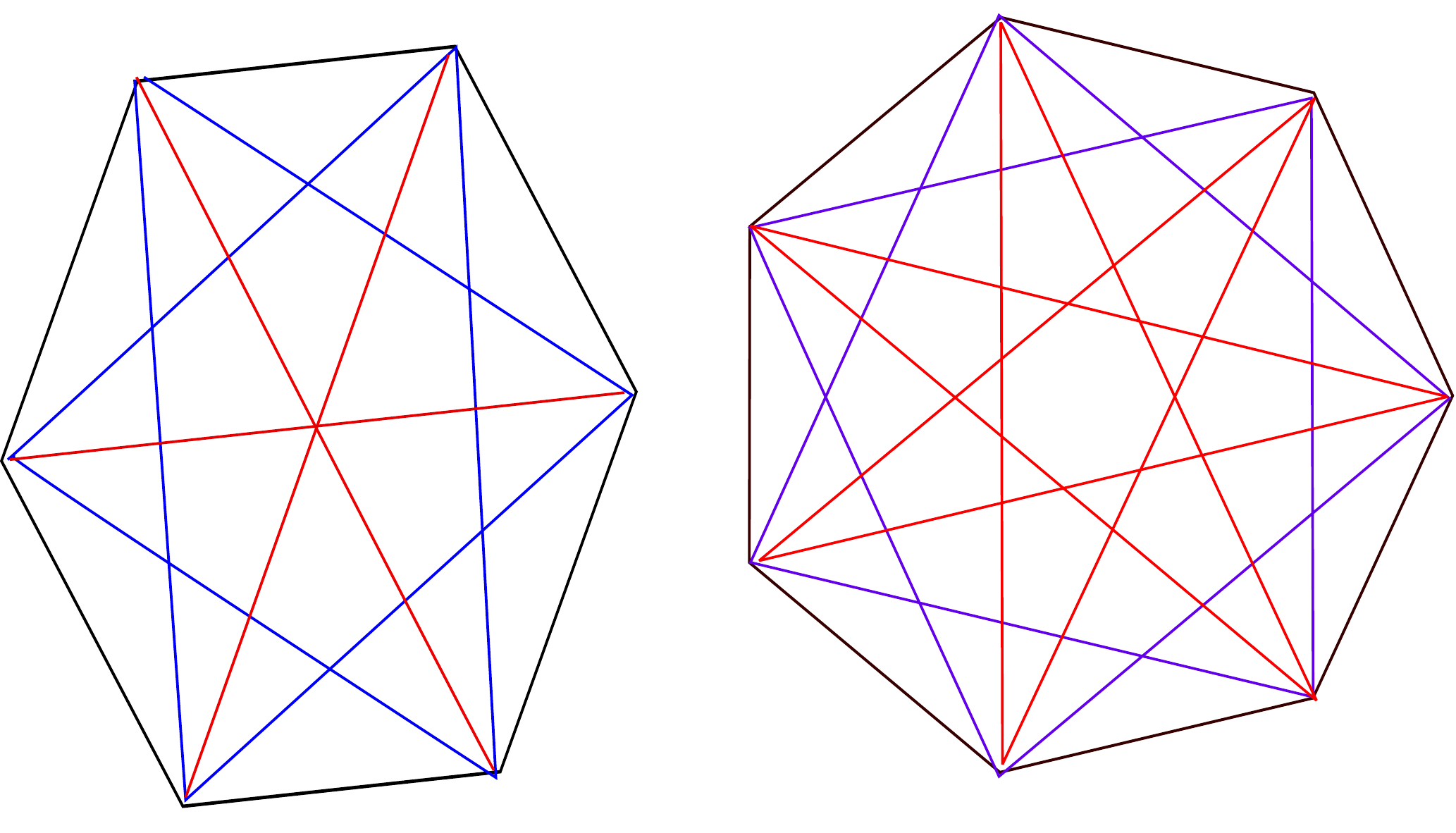}
\end{center}
\centerline{Figure 10}
\end{figure}

The proof of Theorem \ref{main3} is based on the following two theorems of Luko.

\begin{theorem}\label{1}
(\cite[Theorem II]{Luko})
Let $\Gamma_n$ be a convex $n$-gon with each edge of length $1$ and let $g:(0,\infty)\rightarrow \mathbb{R}$ be an increasing concave function.  Then for $n\ge 4$, $1 \le k \le [n/2]$, $k$ is fixed, the inequality
$$\frac{1}{n}\sum_{i=1}^{n}{g(|A_iA_{i+k}|^2)\le g(\sin^2\frac{k\pi}{n}/\sin^2\frac{\pi}{n})}, $$
holds. The sign of equality holds if and only if $\Gamma_n=\Gamma_n^{o}$.
\end{theorem}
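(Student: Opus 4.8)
The plan is to prove the estimate from scratch, separating the role of the concave function $g$ from the Euclidean geometry of the diagonals. Write $d_i=|A_iA_{i+k}|$. By Jensen's inequality (Lemma \ref{convex}(1)) applied to the concave $g$, $\frac1n\sum_i g(d_i^2)\le g(\frac1n\sum_i d_i^2)$, and since $g$ is increasing it suffices to prove the \emph{second-moment bound}
$$\frac1n\sum_{i=1}^n|A_iA_{i+k}|^2\le\frac{\sin^2(k\pi/n)}{\sin^2(\pi/n)},\qquad(\star)$$
whose right-hand side is exactly the squared length of a $k$-step diagonal of $\Gamma_n^o$ (circumradius $1/(2\sin\frac\pi n)$, chord $2R\sin\frac{k\pi}n$). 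The theorem then reduces to $(\star)$ together with a check of when Jensen and $(\star)$ are simultaneously equalities.

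To prove $(\star)$ I would pass to the edge vectors $v_i=A_{i+1}-A_i\in\R^2\cong\CC$, so that $|v_i|=1$ and $\sum_i v_i=0$. Since $A_{i+k}-A_i=v_i+\cdots+v_{i+k-1}$, expanding the square and summing over $i$ gives $\sum_i d_i^2=\sum_{|m|<k}(k-|m|)S_m$ with $S_m=\sum_i\langle v_i,v_{i+m}\rangle$. With the discrete Fourier transform $\hat v(t)=\sum_j v_j\omega^{-jt}$, $\omega=e^{2\pi\mathrm i/n}$, one has $S_m=\frac1n\sum_t|\hat v(t)|^2\cos\frac{2\pi mt}{n}$, and the Fej\'er-kernel identity $\sum_{|m|<k}(k-|m|)\cos\frac{2\pi mt}{n}=\big|\sum_{j=0}^{k-1}\omega^{jt}\big|^2=F_k(t)$, where $F_k(t)=\sin^2\!\frac{k\pi t}n/\sin^2\!\frac{\pi t}n$, collapses everything to
$$\frac1n\sum_i d_i^2=\frac1{n^2}\sum_{t=0}^{n-1}|\hat v(t)|^2F_k(t).$$
Closure forces $\hat v(0)=0$ and the unit-edge condition is Parseval's identity $\sum_t|\hat v(t)|^2=n^2$. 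Hence $(\star)$ is equivalent to the purely trigonometric claim $\max_{1\le t\le n-1}F_k(t)=F_k(1)$, with $\Gamma_n^o$ realizing it because all of its Fourier weight sits at the top frequency $t=1$. (Note convexity is not used here: $(\star)$ holds for every closed equilateral $n$-gon.)

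The main obstacle is this Fej\'er maximum: for $1\le k\le n/2$ and $1\le t\le n-1$, $F_k(t)\le F_k(1)$, strictly for $t\ne1,n-1$ when $k\ge2$. Using $F_k(t)=F_k(n-t)$ I may take $1\le t\le n/2$, and after taking square roots the claim becomes $\sin\frac\pi n\,\bigl|\sin\frac{k\pi t}n\bigr|\le\sin\frac{k\pi}n\sin\frac{\pi t}n$. I would split on the size of $kt$. If $kt\le n$, then $\sin\frac{k\pi t}n\ge0$ and the bound follows from the strict concavity of $G(s)=\log\sin(\pi e^s/n)$ on $(-\infty,\log n)$ — a one-line computation gives $G''(s)=\theta\csc^2\theta\,(\tfrac12\sin2\theta-\theta)<0$ with $\theta=\pi e^s/n$ — applied through Karamata's inequality to the majorization $\{0,\log(kt)\}\succ\{\log k,\log t\}$ (equal sums). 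If $kt>n$, the crude bound $\bigl|\sin\frac{k\pi t}n\bigr|\le1$ suffices, since Jordan's inequality $\sin x\ge\frac2\pi x$ on $[0,\frac\pi2]$ gives $\sin\frac{k\pi}n\sin\frac{\pi t}n\ge\frac{4kt}{n^2}>\frac4n>\frac\pi n\ge\sin\frac\pi n$. The hypothesis $k\le n/2$ enters precisely here, keeping both $\frac{k\pi}n,\frac{\pi t}n$ in $[0,\frac\pi2]$; welding the two regimes into one strict inequality is the delicate part.

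For equality, I would trace the chain backward: equality in the theorem forces equality in both Jensen and $(\star)$. Equality in $(\star)$ concentrates all Fourier mass on the argmax $\{1,n-1\}$, so $v_j=\frac1n(\hat v(1)\omega^j+\hat v(n-1)\omega^{-j})$; imposing $|v_j|\equiv1$ makes $|v_j|^2$ depend on $\omega^{2j}$, which for $n\ge5$ runs through at least three values and therefore forces $\hat v(1)\overline{\hat v(n-1)}=0$, leaving a single frequency — that is, $\Gamma_n=\Gamma_n^o$. The borderline $n=4$, where this symmetry argument is too weak, is instead pinned down by the strict Jensen step, which already demands all $d_i$ equal.
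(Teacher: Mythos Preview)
The paper does not prove this statement; it is quoted from Luko \cite{Luko} as Theorem~\ref{1} and used as a black box in the proof of Theorem~\ref{main3}. Your proposal therefore supplies an argument where the paper offers none.

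Your top-level reduction --- Jensen for the concave $g$, then monotonicity of $g$, collapsing everything to the quadratic bound $(\star)$ --- is exactly the scheme the paper itself describes in its Acknowledgement as its (and Luko's) approach; moreover $(\star)$ is verbatim the paper's Theorem~\ref{2} (Luko's Theorem~III). What is genuinely your own is the discrete-Fourier/Fej\'er-kernel proof of $(\star)$ and of its equality case, which is clean and, as you observe, does not even use convexity of $\Gamma_n$ for the inequality. Your two-regime proof of the Fej\'er maximum (log-concavity of $s\mapsto\log\sin(\pi e^{s}/n)$ plus Karamata for $kt\le n$, Jordan's inequality for $kt>n$) checks out; the ``welding'' you worry about is not actually delicate, since the boundary $kt=n$ gives $\sin(k\pi t/n)=0$ and is trivially strict.

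Two small remarks. First, your handling of $n=4$ via ``the strict Jensen step'' tacitly needs $g$ to be \emph{strictly} concave; the paper's convention (Lemma~\ref{convex}) is indeed $f''<0$, so this is consistent, but it is worth saying explicitly. Second, the statement as quoted allows $k=1$, where both sides equal $g(1)$ for every equilateral $\Gamma_n$ and the ``if and only if'' clause fails; this is a defect of the cited statement rather than of your argument, and you rightly restrict the strictness claim to $k\ge 2$.
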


\begin{theorem}\label{2}
(\cite[Theorem III]{Luko})
Let $\Gamma_n$ be a convex $n$-gon with each edge of length $1$.   Then for $n\ge 4$, $1 \le k \le [n/2]$, $k$ is fixed, the inequality
$$\sum_{i=1}^{n}|A_i-A_{i+k}|^2\le(\sin\frac{k\pi}{n}/\sin\frac{\pi}{n})^2\cdot n$$
holds.
The equality holds if and only if $\Gamma_n=\tau(\Gamma_{n}^{o})$, where $\tau$ is an affine transformation on $\mathbb{R}^2$.
\end{theorem}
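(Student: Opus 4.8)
The plan is to pass to the edge vectors and diagonalize this quadratic form by the discrete Fourier transform, reducing the whole statement to a single trigonometric inequality. Identify $\R^2$ with the complex plane $\CC$ and set $e_j = A_{j+1}-A_j$ for $j=0,\dots,n-1$ (indices mod $n$), so that $|e_j|=1$ and $\sum_j e_j = 0$ since $\Gamma_n$ is closed and equilateral. Because $A_{i+k}-A_i=\sum_{j=0}^{k-1}e_{i+j}$, the left-hand side equals $\sum_i\bigl|\sum_{j=0}^{k-1}e_{i+j}\bigr|^2$. Writing $\zeta=e^{2\pi\sqrt{-1}/n}$ and $\hat e_m=\sum_j e_j\zeta^{-jm}$, I would first record via Parseval the two facts $\hat e_0=\sum_j e_j=0$ and $\sum_{m=0}^{n-1}|\hat e_m|^2=n\sum_j|e_j|^2=n^2$.

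Next I would transform the circular sums $b_i:=\sum_{j=0}^{k-1}e_{i+j}$. A shift-and-resum gives $\hat b_m=D_m\hat e_m$ with $D_m=\sum_{j=0}^{k-1}\zeta^{jm}$, hence $|D_m|^2=\sin^2(km\pi/n)/\sin^2(m\pi/n)$ for $m\neq 0$. Parseval then yields the exact identity
\[
\sum_{i=1}^{n}|A_{i+k}-A_i|^2=\frac1n\sum_{m=1}^{n-1}\frac{\sin^2(km\pi/n)}{\sin^2(m\pi/n)}\,|\hat e_m|^2 ,
\]
the $m=0$ term dropping out because $\hat e_0=0$. (Note that convexity has not yet been used; the identity holds for every closed equilateral $n$-gon.) Writing $\psi(m):=\sin^2(km\pi/n)/\sin^2(m\pi/n)$ and comparing with the regular $n$-gon, whose edge-spectrum is supported entirely on $m\in\{1,n-1\}$, I recognize the target right-hand side as exactly $n\,\psi(1)$. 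Given the constraint $\sum_m|\hat e_m|^2=n^2$, the asserted inequality then follows from the pointwise bound $\psi(m)\le\psi(1)$ for all $m\in\{1,\dots,n-1\}$, since $\tfrac1n\sum_m\psi(m)|\hat e_m|^2\le\tfrac1n\psi(1)\sum_m|\hat e_m|^2=n\,\psi(1)$.

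Everything thus reduces to the claim that, for $1\le k\le[n/2]$, this discrete Fejér-type kernel attains its maximum over $\{1,\dots,n-1\}$ at $m=1$ (and $m=n-1$). This is the main obstacle, and it is genuinely delicate: the naive term-by-term comparison fails because of sign cancellation (e.g. for $n=6,k=3,m=3$). My approach would be through the Chebyshev polynomial $U_{k-1}$ via $\psi(m)=U_{k-1}(\cos(m\pi/n))^2$ and the factorization $\sin(k\theta)/\sin\theta=2^{k-1}\prod_{j=1}^{k-1}(\cos\theta-\cos(j\pi/k))$. All roots lie in $[-\cos(\pi/k),\cos(\pi/k)]$, and $k\le[n/2]$ forces $\cos(\pi/n)\ge\cos(\pi/k)$, so the sample $m=1$ sits to the right of every root; one then shows the magnitude of the product at $\cos(\pi/n)$ dominates its values at the interior samples. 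The truly binding point is that the endpoint value must beat the first interior local maximum of $|U_{k-1}|$, located at $\theta=\pi/2k$; this reduces to the inequality $\sin(k\pi/n)\sin(\pi/2k)\ge\sin(\pi/n)$, which I would verify directly and which degenerates to equality precisely when $k=n/2$.

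Finally, for the equality case, since the above gives $\psi(m)<\psi(1)$ strictly for $2\le k\le[n/2]$ and $m\notin\{1,n-1\}$, equality in the displayed identity forces $\hat e_m=0$ for all such $m$. Inverting the transform then shows $e_j=\beta\zeta^{j}+\gamma\zeta^{-j}$, that is, the edge vectors are an affine image of those of $\Gamma_n^o$, whence $\Gamma_n=\tau(\Gamma_n^o)$ for an affine map $\tau$ (which the unit-edge hypothesis further pins down to an isometry, since a linear map preserving all $n$ equally spaced unit directions is orthogonal). The excluded case $k=1$ is the trivial identity $\sum_i|A_{i+1}-A_i|^2=n=(\sin(\pi/n)/\sin(\pi/n))^2\cdot n$.
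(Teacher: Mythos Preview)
The paper does not prove this statement at all; it is quoted verbatim as \cite[Theorem~III]{Luko} and used as a black box in the proof of Theorem~\ref{main3}. So there is no in-paper argument to compare against, and your Fourier-analytic proof stands on its own.

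Your reduction is correct and clean: the identity
\[
\sum_{i=1}^{n}|A_{i+k}-A_i|^2=\frac{1}{n}\sum_{m=1}^{n-1}\psi(m)\,|\hat e_m|^2,\qquad \psi(m)=\frac{\sin^2(km\pi/n)}{\sin^2(m\pi/n)},
\]
together with $\hat e_0=0$ and $\sum_m|\hat e_m|^2=n^2$, reduces everything to the pointwise bound $\psi(m)\le\psi(1)$. Your two-regime argument for this bound actually works, but the phrase ``first interior local maximum of $|U_{k-1}|$, located at $\theta=\pi/2k$'' is a misnomer: $\theta=\pi/(2k)$ is not a critical point of $\sin(k\theta)/\sin\theta$. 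What you are really using is: (i) $\sin(k\theta)/\sin\theta$ is strictly decreasing on $(0,\pi/(2k)]$ (indeed $\tan(k\theta)-k\tan\theta$ vanishes at $0$ and has derivative $k(\sec^2(k\theta)-\sec^2\theta)>0$ there), so any sample $\theta_m\le\pi/(2k)$ satisfies $\psi(m)\le\psi(1)$ because $\theta_1=\pi/n\le\pi/(2k)$ is the smallest sample; and (ii) for $\theta_m\in[\pi/(2k),\pi-\pi/(2k)]$ the crude estimate $|\sin(k\theta_m)|\le1$, $\sin\theta_m\ge\sin(\pi/(2k))$ yields $\sqrt{\psi(m)}\le 1/\sin(\pi/(2k))$, which is exactly the value of $\sin(k\theta)/\sin\theta$ at $\theta=\pi/(2k)$ and hence $\le\sqrt{\psi(1)}$ by (i). In particular your inequality $\sin(k\pi/n)\sin(\pi/(2k))\ge\sin(\pi/n)$ is not an additional fact to verify but an immediate consequence of the monotonicity in (i). With this clarification the bound, its strictness for $2\le m\le n-2$ when $k\ge2$, and the equality characterization via $\hat e_m=0$ for $m\notin\{1,n-1\}$ all go through as you describe.

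Two side remarks. First, you are right that convexity is never invoked, so you have in fact proved the inequality for arbitrary closed equilateral $n$-gons in the plane. Second, for $k=1$ the ``if and only if'' clause in the statement is vacuous (every unit-edge $\Gamma_n$ attains equality), so the affine characterization is only contentful for $k\ge2$; your proof reflects this correctly.
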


We also need the following fact to prove Theorem \ref{main3}.

\begin{lemma}\label{3}
For $n\ge 5$. Suppose $\Gamma_n$ is a convex $n$-gon with each edge of length $1$ and 
$\Gamma_n=\tau(\Gamma_{n}^{o})$,  where $\tau$ is an affine transformation on $\mathbb{R}^2$. 
Then $\Gamma_n=\Gamma_{n}^{o}$.
\end{lemma}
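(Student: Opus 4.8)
The plan is to show that an affine image of a regular $n$-gon which itself has all edges of length $1$ must already be regular, provided $n \ge 5$. First I would record the rigidity principle behind this: for $n \ge 5$ the regular $n$-gon $\Gamma_n^o$ has edge vectors $e_i = A_{i+1}-A_i$ (with $A_i$ the vertices of $\Gamma_n^o$) that include at least three pairwise non-parallel directions, and the constraint "$|\tau(e_i)| = 1$ for all $i$" together with "$|e_i| = 1$ for all $i$" forces $\tau$ to preserve length on three independent directions — in the plane this pins down the associated linear map up to an orthogonal transformation. Concretely, write $\tau(x) = Mx + b$ with $M \in \mathrm{GL}_2(\mathbb{R})$; the edge vectors of $\tau(\Gamma_n^o)$ are $Me_1, \dots, Me_n$, and the hypothesis that $\tau(\Gamma_n^o) = \Gamma_n$ has all edges of length $1$ says $|Me_i| = 1 = |e_i|$ for every $i$.

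The key computation is then the following. The quadratic form $Q(v) = |Mv|^2 - |v|^2$ is a symmetric bilinear form on $\mathbb{R}^2$ that vanishes on each $e_i$. For $n \ge 5$ there are at least three of the $e_i$ that are pairwise linearly independent (the edge directions of a regular $n$-gon are $2\pi/n$ apart, so as long as $n \ge 5$ no two consecutive edges are parallel and one can exhibit three mutually non-parallel ones; for $n=4$ this fails, which is exactly why the square admits non-trivial affine deformations with unit edges — rectangles, parallelograms — consistent with Proposition 4.3). A quadratic form in two variables vanishing on three lines in general position is identically zero, hence $|Mv| = |v|$ for all $v$, i.e. $M \in \mathrm{O}(2)$. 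Therefore $\tau$ is an isometry of $\mathbb{R}^2$, so $\tau(\Gamma_n^o)$ is congruent to $\Gamma_n^o$; since both $\Gamma_n^o$ and $\tau(\Gamma_n^o)=\Gamma_n$ are convex $n$-gons with unit edges inscribed the same way, $\Gamma_n = \Gamma_n^o$ (more precisely, $\Gamma_n$ equals $\Gamma_n^o$ as an element of the moduli space $\{\Gamma_n\}$, where we identify figures differing by a Euclidean motion).

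I expect the main obstacle to be purely bookkeeping rather than conceptual: making precise the claim that among $e_1,\dots,e_n$ there are three pairwise non-parallel vectors exactly when $n \ge 5$, and — more subtly — ruling out the degenerate case. If $\tau$ is allowed to be singular (so $\Gamma_n$ is the degenerate double straight arc $\Gamma_n^-$), then $M$ has rank $\le 1$, its image is a line, and the $e_i$ cannot all map to unit vectors unless they already all lie on that line, which is impossible for a genuine regular $n$-gon; so the degenerate case is excluded at the outset, and one should state this carefully since $\{\Gamma_n\}$ does contain degenerate members. An alternative, slightly cleaner route avoiding the rank-$1$ worry: observe directly that $\sum_i e_i = 0$ and $\sum_i |e_i|^2 = n$ for $\Gamma_n^o$, and by Theorem 5.3 equality in the $k=1$ case is what $\Gamma_n = \tau(\Gamma_n^o)$ realizes, so one may instead feed the conclusion back into, say, the $k=2$ instance of Luko's inequality — but the three-directions argument above is the most transparent, so that is the one I would write up.
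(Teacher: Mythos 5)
Your proposal is correct, and it rests on the same rigidity principle as the paper's proof: the linear part of $\tau$ must preserve the lengths of too many distinct edge directions to be anything but orthogonal, and $n\ge 5$ is exactly what supplies enough directions (the case $n=4$ failing because a square has only two edge directions, matching Proposition 4.3). The execution differs in a worthwhile way. The paper first applies a polar decomposition $A=\Delta\cdot\Omega$ to reduce to a diagonal matrix, then solves explicitly for the coordinates $(x_i^o,y_i^o)$ and concludes that the $n$ rotated edge vectors would occupy at most $4$ points on the unit circle (the intersection of the circle with the ellipse $\lvert\Delta v\rvert=1$), which is impossible for $n>4$. You instead observe that $Q(v)=\lvert Mv\rvert^2-\lvert v\rvert^2$ is a binary quadratic form vanishing on each edge line, and that a nonzero binary quadratic form vanishes on at most two lines through the origin; three pairwise non-parallel edge directions (which exist for all $n\ge 5$, whether $n$ is odd or even) then force $Q\equiv 0$ and $M\in O(2)$. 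Your route avoids the polar decomposition and the coordinate computation entirely, and it also disposes of the worry you raise about a singular $M$ automatically, since $Q\equiv 0$ already implies $M$ is orthogonal and hence invertible, so no separate rank argument is needed. The only point to write out carefully is the bookkeeping you already flag: for $n$ odd all $n$ edge directions are distinct as lines, and for $n$ even they fall into $n/2\ge 3$ distinct lines, so three pairwise non-parallel edge vectors always exist when $n\ge 5$.
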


\begin{proof} We may assume  $\tau(v)=Av+b$ with $A\in \operatorname{End}(\mathbb{R}^2)$ and $b\in \mathbb{R}^2$.
Let $\{e_1,e_2\}$ be the canonical basis of $\mathbb{R}^2$ and let $A$ denote the matrix induced by $A$. By polar decomposition, we may assume $A=\Delta\cdot \Omega$, where $\Delta=\left(\smallmatrix d_1& 0\\ 0 & d_2 \endsmallmatrix\right)$
 is diagonal and $\Omega\in O(2)$. 
Let $(x_i,y_i)$ be the coordinate of $A_{i+1}-A_i$ and $(x_i^{o},y_i^{o})$ be the coordinate of $\Omega(A_{i+1}^{o}-A_i^{o})$, then
$$(x_i,y_i)=(x_i^{o}d_1,y_i^{o}d_2),\qquad (5.3)$$

So
$$(x_i^{o})^2d_1^2+(y_i^{o})^2d_2^2=x_i^2+y_i^2,\qquad (5.4)$$
Since $|A_iA_{i+1}|=1$, $|A^o_iA^o_{i+1}|=1$ and $\Omega$ is an isometry, we have  
$$x_i^2+y_i^2=1, \ \ (x_i^{o})^2+(y_i^{o})^2=1,\qquad (5.5)$$

By (5.4) and (5.5), we have $$(x_i^{o})^2d_1^2+(y_i^{o})^2d_2^2=1, \qquad (5.6)$$
By (5.5), plugging $(y_i^{o})^2=1-(x_i^{o})^2$ into (5.6), we have 
$$(x_i^{o})^2d_1^2+(1-(x_i^{o})^2)d_2^2=1,$$
that is,
$$(x_i^{o})^2(d_1^2-d_2^2)=1-d_2^2.$$
If $d_1^2\ne d_2^2$, then
$$(x_i^{o})^2=\frac{1-d_2^2}{d_1^2-d_2^2},\qquad (5.7)$$
so
$$(y_i^{o})^2=1-(x_i^{o})^2=\frac{1-d_1^2}{d_2^2-d_1^2}.$$
So
$$(x_i^{o},y_i^{o})=\left(\pm \sqrt{\frac{1-d_2^2}{d_1^2-d_2^2}},\pm \sqrt{\frac{1-d_1^2}{d_2^2-d_1^2}}\right) \qquad (5.8).$$
So  (5.8) implies that  $\{\Omega(A^o_iA^o_{i+1})\}_{i=1}^n$, the vertices of a regular $n$-gon centered at the origin $O$, occupy at most 4 positions in the plane,
but this is impossible, since, $n>4$. So $d_1^2=d_2^2$. Then by (5.7) we have 
$$d_1^2=d_2^2=1$$ and we get $\Delta\in O(2)$. So $A=\Delta\cdot \Omega\in O(2)$ and $\tau$ is an isometry of $\mathbb{R}^2$. So $\Gamma_n=\Gamma_n^o$.
\end{proof}
\begin{proof} [Proof of Theorem \ref{main3}]  The proof of Theorem \ref{main3} consists of  two cases: The case of $\alpha=2$ is based
on Theorem \ref{2}, and also need Lemma \ref{3}. The case of $\alpha <2$ is based on Theorem \ref{1}.

(1)  The case of  $\alpha=2$: (5.1) and (5.2) become
$$E_n^2(\Gamma_n)=\sum_{k=1}^{[\frac{n}{2}]}\mu_{n,k}{E_{n,k}^2}(\Gamma_n)\qquad (5.9)$$
and 
$$E_{n,k}^2(\Gamma_n)=\sum_{i=1}^{n}{|A_iA_{i+k}|^2}.\qquad (5.10)$$
By Theorem \ref{2} and (5.10), we have 
$$E_{n,k}^2(\Gamma_n)\le (\sin\frac{k\pi}{n}/\sin\frac{\pi}{n})^2\cdot n,\qquad(5.11)$$
By (5.9) and (5.11), we have 
$$E_n^2(\Gamma_n)\le\sum_{k=1}^{[\frac{n}{2}]}{\mu_{n,k}(\sin\frac{k\pi}{n}/\sin\frac{\pi}{n})^2}\cdot n.\qquad(5.12)$$
When $\Gamma_n=\Gamma_n^{o}$, by Theorem \ref{2}, the equal sign of $(5.11)$ holds. Then the equal sign of $(5.12)$ also holds. So $\Gamma_n^o$ is a maximal point of $E_n^2$, and the maximum equals 
 $$\sum_{k=1}^{[\frac{n}{2}]}\mu_{n,k}(\sin{\frac{k\pi}{n}/\sin\frac{\pi}{n})}^2\cdot n,$$
Now if $\Gamma_n$ is a maximal point of $E_n^2$, then
 $$E_n^2(\Gamma_n)=\sum_{k=1}^{[\frac{n}{2}]}\mu_{n,k}(\sin{\frac{k\pi}{n}/\sin\frac{\pi}{n})}^2\cdot n,$$
that is, the sign of equality holds in $(5.12)$, so the sign of equality holds in $(5.11)$, so by Theorem \ref{2}, $\Gamma_n=\tau(\Gamma_n^o)$, where $\tau$  is an affine transformation on $\mathbb{R}^2$ or $\tau$ is a limit of affine transformations on $\mathbb{R}^2$.
 By Lemma \ref{3}, $\Gamma_n=\Gamma_n^o$.

(2) The case of  $\alpha<2$: We define 
$$g_\alpha(x)=\begin{cases}
x^{\frac{\alpha}{2}}, \alpha>0, \\
\frac{1}{2}\ln(x), \alpha=0,\\
-x^{-\frac{\alpha}{2}}, \alpha<0
\end{cases}
$$
Then by (1.2) we have 
$$g_\alpha(x)=f_\alpha(\sqrt{x})=c_\alpha f_{\alpha/2}(x).$$
where $c_\alpha=1/2$ when $\alpha=0$ and $c_\alpha=1$ otherwise.
We rewrite   (5.2) as


$$E_{n,k}^\alpha(\Gamma_n)=\sum_{i=1}^{n}{g_\alpha(|A_iA_{i+k}|^2)}\qquad (5.13)$$

Since $\alpha<2$, by Lemma  \ref{concon}, 
$g_\alpha=f_{\alpha/2}$ is increasing and concave for any $x> 0$. 

Then by Theorem \ref{1} and (5.13), we have 
$$E_{n,k}^{\alpha}(\Gamma_n)\le n g_{\alpha} (\sin^2\frac{k\pi}{n} / \sin^2\frac{\pi}{n})\qquad(5.14)$$
By (5.14) and (5.1), we have 
$$E_n^{\alpha}(\Gamma_n)= \sum_{k=1}^{[\frac{n}{2}]}\mu_{n,k}{E_{n,k}^{\alpha}(\Gamma_n)\le \sum_{k=1}^{[\frac{n}{2}]}\mu_{n,k}g_{\alpha}(\sin^2\frac{k\pi}{n}/\sin^2\frac{\pi}{n})}\qquad(5.15)$$
When $\Gamma_n=\Gamma_n^{o}$ by Theorem \ref{1}, the equal sign of (5.14)  holds. Then the equal sign of (5.15) also holds. So $\Gamma_n^{o}$ is a maximal point of $E_n^\alpha$, and the maximum of $E_n^{\alpha}$ equals

$$\sum_{k=1}^{[\frac{n}{2}]}{\mu_{n,k}ng_\alpha(\sin^2\frac{k\pi}{n}/\sin^2\frac{\pi}{n}}).$$ If $\Gamma_n$ is a maximal point of $E_n^\alpha$, then $$E_n^{\alpha}(\Gamma_n)=\sum_{k=1}^{[\frac{n}{2}]}{\mu_{n,k}g_{\alpha}(\sin^2\frac{k\pi}{n}/\sin^2\frac{\pi}{n})}$$
Then $(5.15)$ holds, so the equal sign in (5.14) holds, so by Theorem \ref{1}, we have $\Gamma_n=\Gamma_n^{o}$.
\end{proof}



\end{document}